\documentclass[11pt,a4paper,reqno]{amsart} %Fr o m detta
\pagestyle{plain}
\usepackage{amssymb}
\usepackage{latexsym}
\usepackage{amsmath}
\usepackage{mathrsfs}
\usepackage{nccmath}    % Makes distances better, e.g. in \intertext
\usepackage[T1]{fontenc} % \usepackage[X2,T1]{fontenc}
\usepackage{color}
\usepackage{cite}
\usepackage{color}
\usepackage{comment}
\usepackage{hyperref}
\hypersetup{hidelinks}

\usepackage{calc}                   %Detta beh?vs f?r att
                    %%fixa%
                    %%%med numrering som man vill.

%\newcommand{\essinf}{\operatorname{ess\, inf}}
%\newcommand{\mathscr}[1]{\mathcal #1}

\newcommand{\Char}{\operatorname{Char}}
\newcommand{\scal}[2]{\langle #1,#2\rangle}

\newcommand{\rr}[1]{\mathbf R^{#1}}

\newcommand{\nn}[1]{\mathbf N^{#1}}

\newcommand{\nm}[2]{\Vert #1\Vert _{#2}}

\newcommand{\op}{\operatorname{Op}}

\newcommand{\sets}[2]{\{ \, #1\, ;\, #2\, \} }
\newcommand{\Sets}[2]{\left \{ \, #1\, ;\, #2\, \right \} }

\newcommand{\ep}{\varepsilon}
\newcommand{\fy}{\varphi}

\newcommand{\eabs}[1]{\langle #1\rangle}     %%%%%   for <x>

\newcommand{\vrum}{\vspace{0.1cm}}

\newcommand{\Op}{\operatorname{Op}}

\newcommand{\GL}{\mathbf{M}}

\newcommand{\maclK}{\mathcal K}

\newcommand{\mascF}{\mathscr F}

\newcommand{\mascS}{\mathscr S}

\newcommand{\phf}{\mathfrak{P}_r^\mathrm{hom}}

%%%% SC 20/10/23 %%%%
\newcommand{\Hphi}{H^\Phi_{s,\sigma}}
\newcommand{\WF}{{\mathrm{WF}}}
%%%%%%%%%%%%%%%
%%%% SC 20/11/23 %%%%
\newcommand{\lossthreshold}{\mathfrak{L}_{\Phi,d}}
\newcommand{\WFHphi}{\WF^\Phi_{s,\sigma}}
%%%%%%%%%%%%%\%%

\setcounter{section}{\value{section}-1}   %fixar sektionsnumer s? att
%                                         %dessa b?rjar med 0.

\numberwithin{equation}{section}          %Detta g?r att man f?r
%                                         %formelnummer av typ
%                                         %(sec.nr).

\newtheorem{thm}{Theorem}
\numberwithin{thm}{section}

\newcommand{\rubrik}{}
\newtheorem{prop}[thm]{Proposition}
\newtheorem{cor}[thm]{Corollary}

\theoremstyle{definition}

\newtheorem{defn}[thm]{Definition}

\theoremstyle{remark}
\newtheorem{rem}[thm]{Remark}

\title{Fourier type operators
on Orlicz spaces and the role of
Orlicz Lebesgue exponents}

\author{Matteo Bonino}

\address{Dipartimento di Matematica ``G. Peano'', Universit\'a
degli Studi di Torino}

\email{matteo.bonino@unito.it}

\author{Sandro Coriasco}

\address{Dipartimento di Matematica ``G. Peano'', Universit\'a
degli Studi di Torino}

\email{sandro.coriasco@unito.it}

\author{Albin Petersson}

\address{Department of Mathematics,
Linn{\ae}us University, Sweden}

\email{albin.petersson@lnu.se}

\author{Joachim Toft}

\address{Department of Mathematics,
Linn{\ae}us University, Sweden}

\email{joachim.toft@lnu.se}

\begin{document}

\begin{abstract}
% We deduce continuity properties
% of classes of Fourier multipliers,
% pseudo-differential, and Fourier integral operators
% when acting on Orlicz spaces. Especially
% we show classical results like
% H{\"o}rmander's improvement
% of Mihlin's Fourier multiplier theorem
% are extendable to the framework
% of Orlicz spaces. We also show
% how some properties of the
% Young functions $\Phi$ of the Orlicz spaces
% are linked to properties of
% certain Lebesgue exponents $p_\Phi$ and
% $q_\Phi$ emerged from $\Phi$.
% % SC 20/10/23 %%%%%
% Finally, we apply our continuity results to study 
% mapping
% properties on scales of Sobolev spaces modelled on 
% Orlicz spaces,
% and prove propagation results for an associated 
% notion of (global)
% wave-front set.
We deduce continuity and (global) wave-front
properties of classes of Fourier multipliers,
pseudo-differential, and Fourier integral operators
when acting on Orlicz spaces, or more generally,
on Orlicz-Sobolev type spaces. In particular,
we extend H{\"o}rmander's improvement
of Mihlin's Fourier multiplier theorem
to the framework of Orlicz spaces. We also
show how Young functions $\Phi$ of the
Orlicz spaces are linked to properties of
certain Lebesgue exponents $p_\Phi$ and
$q_\Phi$ emerged from $\Phi$.
% SC 20/10/23 %%%%%
% Finally, we apply our continuity results to study mapping
% properties on scales of Sobolev spaces modelled on Orlicz spaces,
% and prove propagation results for an associated notion of (global)
% wave-front set.

%%%%%%%%%%%%% 
\end{abstract}

\keywords{Orlicz, quasi-Banach, quasi-Young functionals}

\subjclass[2010]{primary: 35S05, 46E30, 46A16, 42B35
secondary: 46F10}

\maketitle

%%%%%%%%%%%%%%%%%%%%%%%%%%%%%
\section{Introduction}\label{sec0}
%%%%%%%%%%%%%%%%%%%%%%%%%%%%%
\par

% In this paper we investigate the
% continuity properties for Fourier 
% multipliers and pseudo-differential 
% operators of order 0 acting on a 
% suitable class of Orlicz spaces. 
% Namely, the interpolation results 
% obtained in \cite{Liu} by Liu and Wang 
% allow us to prove the continuity 
% property for $T:L^{\Phi} \rightarrow
% L^{\Phi}$ and $T:wL^{\Phi} 
% \rightarrow wL^{\Phi}$ (see Section 
% \ref{sec1} for the definitions of 
% Orlicz and weak Orlicz spaces)
% provided that the function $\Phi$ 
% satisfies 
% appropriate conditions.

% \par

% Our work extends the result proved
% by Toft and \"Uster in the paper 
% \cite{TofUst} with respect to the 
% Orlicz modulation spaces, to the 
% context 
% of pure Orlicz spaces. Orlicz spaces, 
% which were first introduced by W. 
% Orlicz in 1931, represent a 
% generalization for the class of $L^p$ 
% spaces 
% of special physical interest.

Orlicz spaces, introduced by W. Orlicz 
in 1932 \cite{Orl}, are Banach spaces 
which generalize the normal $L^p$ spaces 
(see Section \ref{sec1} for notations). 
Orlicz spaces are denoted by $L^\Phi$, 
where $\Phi$ is a Young function, and we 
obtain the usual $L^p$ spaces, 
$1\leqslant p < \infty$, by choosing 
$\Phi(t) = t^p$. For more facts on
Orlicz spaces, see \cite{Rao}.

\par

An advantage of Orlicz spaces is that 
they are suitable when solving certain 
problems where $L^p$ spaces are 
insufficient. As an example, consider 
the entropy of a probability density 
function $f$ given by
$$
E(f) = -\int f(\xi)
\log f(\xi) \, d \xi.
$$
In this case, it may be more suitable to 
work with an Orlicz norm estimate, for 
instance with $\Phi(t) = t \log(1+t)$, 
as opposed to $L^1$ norm estimates. 

\par

The literature on Orlicz spaces is
rich, see e.g. \cite{ApKaZa,
MajLab1,MajLab2,HaH,Mil,OsaOzt}
and the references therein. Recent 
investigations also put pseudo-differential operators in the framework 
of Orlicz 
modulation spaces (cf \cite{TofUst}, see 
also \cite{SchF,ToUsNaOz} for further 
properties on Orlicz modulation spaces).
In this paper, we deal with pseudo-differential operators as well as 
Fourier multipliers in Orlicz spaces.

\par

Results pertaining to continuity 
properties on $L^p$-spaces are
well-established. Our approach is to utilize 
a Marcinkiewicz interpolation-type 
theorem by Liu and Wang in \cite{Liu}
to extend such continuity properties to 
also hold on Orlicz spaces. As an 
initial example, the methods described 
in the subsequent sections allow us to 
obtain the following extension of 
Mihlin's Fourier multiplier theorem (see 
\cite{Mic} for the original theorem).
\begin{thm} %[Mihlin]
Let $\Phi$ be a strict Young function 
and $a\in L^\infty (\rr d \setminus
\{ 0\})$ be such that
$$
\sup_{\xi\neq 0}
\left(
|\xi|^{|\alpha|} 
\left|
\partial^\alpha a(\xi)
\right| \right)
$$
is finite for every $\alpha\in \nn d$ 
with $|\alpha|\leqslant
[\frac{d}{2}]+1$. Then $a(D)$ is 
continuous on $L^\Phi(\rr d)$.
\end{thm}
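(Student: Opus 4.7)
The plan is to reduce the theorem to the classical $L^p$ version of the Hörmander--Mihlin multiplier theorem and then transfer the boundedness to $L^\Phi$ by means of the Marcinkiewicz-type interpolation theorem of Liu and Wang. Concretely, under the hypothesis
$$
\sup_{\xi \neq 0}\bigl(|\xi|^{|\alpha|}\,|\partial^\alpha a(\xi)|\bigr)<\infty,\qquad |\alpha|\leqslant [d/2]+1,
$$
the classical theorem of Hörmander (building on Mihlin) already yields that $a(D)$ is bounded on $L^p(\rr d)$ for every $p\in(1,\infty)$. In particular, $a(D)$ is of strong (hence weak) type $(p,p)$ for every such $p$.

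Next I would invoke the Lebesgue exponents $p_\Phi,q_\Phi$ attached to $\Phi$, whose properties will have been developed in the earlier sections of the paper. The hypothesis that $\Phi$ is a \emph{strict} Young function should be exactly the condition guaranteeing
$$
1<p_\Phi\leqslant q_\Phi<\infty .
$$
Pick any $p_0,p_1$ with $1<p_0<p_\Phi\leqslant q_\Phi<p_1<\infty$. Since $a(D)$ is bounded on both $L^{p_0}$ and $L^{p_1}$, we have the weak-type estimates at both endpoints that the Liu--Wang interpolation theorem requires.

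Applying the Liu--Wang theorem (Marcinkiewicz interpolation in the Orlicz setting) then yields a bounded extension of $a(D)$ to $L^\Phi(\rr d)$, since the Lebesgue exponents of $\Phi$ lie strictly between $p_0$ and $p_1$. Because $a(D)$ is originally defined on, say, the Schwartz class which is dense in $L^\Phi$ (under the strictness assumption), this bounded extension coincides with $a(D)$ wherever both are defined, giving the claimed continuity.

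The main obstacle is not the classical multiplier theorem itself, but rather linking the abstract strictness condition on $\Phi$ to a usable pair of Lebesgue exponents $p_\Phi,q_\Phi$ strictly inside $(1,\infty)$, and verifying that the Liu--Wang hypotheses are met with the chosen $p_0,p_1$; this is precisely the bridge between the Young-function side and the interpolation side that the section on Orlicz Lebesgue exponents is designed to provide. A subsidiary point is to ensure that the domain on which $a(D)$ is initially defined is dense in $L^\Phi$, so that the interpolated operator can legitimately be identified with $a(D)$.
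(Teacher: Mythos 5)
Your proposal follows essentially the same route as the paper: the classical H\"ormander--Mihlin $L^p$ multiplier theorem supplies strong (hence weak) type $(p_j,p_j)$ bounds at two exponents chosen strictly below and above the Orlicz Lebesgue exponents of $\Phi$ (strictness of $\Phi$ giving exactly $q_\Phi>1$ and $p_\Phi<\infty$ via Proposition \ref{Prop:YounFuncEquivCond}), and the Liu--Wang Marcinkiewicz-type interpolation (Proposition \ref{Prop:OrliczIntPol}) then yields continuity on $L^\Phi(\rr d)$, exactly as in the proofs of Theorems \ref{Thm:OrlContPseudo} and \ref{Thm:OrlContHormMult}. The only discrepancy is notational: in the paper's convention $q_\Phi$ is the lower and $p_\Phi$ the upper exponent, so the requirement reads $p_0<q_\Phi\leqslant p_\Phi<p_1$, with the roles of the two symbols interchanged relative to your write-up.
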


In fact, we also obtain H\"ormander's 
improvement of Mihlin's Fourier 
multiplier theorem (cf \cite{Ho0}) in 
the context of Orlicz spaces. This 
result can be found %SC 20/10/23 %%%%%%%%
in the first part of
%%%%%%%
Section \ref{sec3} 
(Theorem \ref{Thm:OrlContHormMult}). In 
a similar manner, we obtain continuity 
results for pseudo-differential 
operators of order $0$ in Orlicz spaces 
as well, see Theorem 
\ref{Thm:OrlContPseudo}. 
%SC 26/3/24 %%%%%%%%
This is an alternative proof to the one which can 
%be achieved by means
%
be achieved by a general analysis of
operators on Banach function spaces,
see \cite{Ka14}.
%(This reference was pointed out to us after the 
%completion of our analysis.)
We further extend this set of results showing the continuity on Orlicz 
%%%%%%%%%%%%%%%%%%%%
spaces of a broad class of Fourier integral operators, under
a condition on the order of the amplitude (that is, a loss of
derivatives and decay), see Theorem \ref{Thm:OrlContSGFIOs}.

The second part of Section \ref{sec3} is devoted to introducing a 
version of (global) wave-front set involving classes of Sobolev spaces 
modelled on Orlicz spaces, and studying its propagation from data to 
solutions of some equations.
%%%%%%%%%%%%%%%%
\par

Section \ref{sec1} %and Appendix \ref{App:Appendix} 
also includes 
investigations of Lebesgue exponents 
$p_\Phi$ and $q_\Phi$ constructed from 
the Young function $\Phi$, which are 
important for the interpolation theorem. 
These parameters were described in 
\cite{Liu}, where it was claimed that 
\begin{align}
    p_\Phi < \infty &\iff \Phi \text{ 
    fulfills  the $\Delta_2$-condition}
    \label{eq:pPhi}
    \intertext{and}
    q_\Phi > 1 &\iff \Phi \text{ is 
    strictly convex.}\label{eq:qPhi}
\end{align}
In Section \ref{sec1}, %and Appendix \ref{App:Appendix}, 
we confirm that 
\eqref{eq:pPhi} is correct, but that 
neither logical implication of 
\eqref{eq:qPhi} is correct. Instead, 
other conditions on $\Phi$ are found 
which characterize $q_\Phi > 1$ (see 
Proposition 
\ref{Prop:YounFuncEquivCond}). At
the same time, we deduce a weaker form
of the equivalence \eqref{eq:qPhi}
and show that if $q_\Phi > 1$, then
there is an equivalent Young function
to $\Phi$ which is strictly convex.
(see Proposition
\ref{Prop:YounFuncEquivCond2}).

\par

%SC 26/3/24  %%%%%%%%
\section*{Acknowledgements}
The first and second authors have been partially 
supported by INdAM - GNAMPA Project 
CUP\_E53C22001930001 (Sc. Resp. S. Coriasco).
The first and second authors gratefully
acknowledge also the support by the
Department of Mathematics, Linn{\ae}us 
University, V{\"a}xj{\"o}, Sweden,
during their stays in A.Y. 2022/2023 and A.Y. 
2023/2024, when most of the results presented in 
this paper have been obtained.
The third and fourth authors are supported by 
Vetenskapsr{\aa}det
(Swedish Science Council), within the project
2019-04890.
Thanks are due to E. Nabizadeh-Morsalfard, for useful comments leading
to improvements of the content and style.
The authors are also grateful to the anonymous Referees, for their observations 
and suggestions, aimed at improving the overall quality of the paper.

%%%%%%%%%%%%%%

%%%%%%%%%%%%%%%%%%%%%%%
\section{Preliminaries}\label{sec1}
%%%%%%%%%%%%%%%%%%%%%%%

\par

%{\textbf{Describing text}}

In this section we recall some
facts on Orlicz spaces and
pseudo-differential operators.
Especially we recall Lebesgue
exponents given in e.{\,}g.
\cite{Liu} and explain some
of their features.

\par

%%%
\subsection{Orlicz Spaces}\label{subsec1.1}
%%%

\par

In this subsection we provide an overview
of some basic definitions and state some
technical results that will be needed.
First, we recall the definition of weak $L^p$ spaces.

\par

\begin{defn}
Let $p\in (0,\infty ]$. The \emph{weak $L^p$ space}
$wL^p(\mathbf{R}^{d})$ consists of all Lebesgue 
measurable functions $f : \rr d \to \mathbf C$
for which
\begin{equation}
\| f  \|_{wL^p}
\equiv
\sup_{t>0} t \left ( \mu _f(t) \right )^{\frac{1}{p}}
\end{equation}
is finite. Here $\mu _f(t)$ is the Lebesgue
measure of the set
$\sets {x\in \rr d}{|f(x)|>t}$.
% $\{ x \in \mathbf{R}^{d} : |f(x)|>t \}$
% .
% there
% exists a constant $C>0$ such that for all $t >0$
% %%
% \begin{equation}
% \mu \{ x \in \mathbf{R}^{d} : |f(x)|>t \}
% \leqslant \frac{C^p}{t^p}
% \end{equation}
% %%
% where $\mu$ denotes the measure. The best constant $C$ for the previous inequality denotes the $wL^p$-norm, that is
\end{defn}

\par

\begin{rem}\label{Rem:WeakLp}
Notice that the $wL^p$-norm is not a true norm, since the
triangular inequality fails. Nevertheless, one has that
$\| f  \|_{wL^p} \leqslant \| f  \|_{L^p}$.
In particular, $ L^p(\rr d)$ is continuously embedded
in $wL^p (\rr d)$.
\end{rem}

\par

Next, we recall some facts concerning
Young functions and Orlicz
spaces. (See \cite{Rao,HaH}.)

\par

\begin{defn}\label{Def:ConvFunc}
A function $\Phi :\mathbf R \rightarrow
\mathbf R \cup \{ \infty\}$ is called \emph{convex} if
\begin{equation*}
\Phi (s_1 t_1+ s_2 t_2)
\leqslant s_1 \Phi (t_1)+s_2\Phi (t_2)
\end{equation*}
when
$s_j,t_j\in \mathbf{R}$
satisfy $s_j \geqslant 0$, $j=1,2$, and
$s_1 + s_2 = 1$.
\end{defn}
We observe that $\Phi$ might not
 be continuous, because we permit
 $\infty$ as function value. For example,
$$
\Phi (t)=
\begin{cases}
  c,&\text{when}\ t \leqslant a
  \\[1ex]
   \infty ,&\text{when}\ t>a
\end{cases}
$$
is convex but discontinuous at $t=a$.

\par

\begin{defn}\label{Def:YoungFunc}
Let $\Phi$ be a function from $[0,\infty)$ to $[0,\infty]$.
Then $\Phi$ is called a \emph{Young function} if
\begin{enumerate}
  \item $\Phi$ is convex,

  \vrum

  \item $\Phi (0)=0$,

  \vrum

  \item $\Phi (t)<\infty$ for some $t>0$,

  \vrum

  \item $\lim
\limits_{t\rightarrow\infty} \Phi (t)=+\infty$.
\end{enumerate}
\end{defn}

\par

It is clear that $\Phi$ in
Definition \ref{Def:YoungFunc} is
non-decreasing, because if $0\leqslant t_1\leqslant t_2$
and $s\in [0,1]$ is chosen such
that $t_1=st_2$, then
\begin{equation*}
\Phi (t_1)=\Phi (st_2+(1-s)0)
%    \\[1ex]
\leqslant s\Phi (t_2)+(1-s)\Phi (0)
%    \\[1ex]
\leqslant \Phi (t_2),
\end{equation*}
since $\Phi (0) =0$ and $s\in [0,1]$.

\par

For any Young function, the
\emph{conjugate Young function}
is the function from
$[0,\infty)$ to $[0,\infty]$, given
by
\begin{equation}\label{Eq:YoungConj}
\Phi ^*(t) = \sup_{s>0} 
\left( s t - \Phi(s) \right).
\end{equation}
Notice that $\Phi ^*$ is also a Young
function.

\par

The Young functions $\Phi _1$ and $\Phi _2$
are called \emph{equivalent}, if there is
a constant $C\geqslant 1$ such that
\begin{alignat}{2}
C^{-1}\Phi _2(t)&\leqslant
\Phi _1(t)\leqslant C\Phi _2(t), &
\quad t&\in [0,\infty ).
\notag
\intertext{We recall that a Young function
is said to fulfill the \emph{$\Delta _2$-condition}
if there is a constant $C\geqslant 1$ such that}
\Phi (2t) &\leqslant C\Phi (t),&\qquad
t&\in [0,\infty ).
\label{Eq:Delta2Cond}
\intertext{Evidently, if $\Phi$
satisfies the $\Delta _2$-condition,
then $\Phi (t)<\infty$
when $t\geqslant 0$.
\linebreak
\indent
% We also introduce the following 
% condition.
A Young
function is said to fulfill the
\emph{$\Lambda$-condition} if there is a $p>1$
such that}
\Phi(ct) &\leqslant c^p \Phi(t), &\qquad
t &\in [0,\infty ),\ c\in (0,1].  
\label{qphiCond}
\end{alignat}
Note that the $\Lambda$-condition
is also called the
\emph{lower Matuszewska-Orlicz index} in the 
literature
(cf. p. 117 in \cite{KaMaPe}).
The following characterization of Young
functions fulfilling the $\Delta _2$-condition
follows from the fact that any Young function is 
increasing. That is, $\Phi (t_1)\le \Phi (t_2)$
when $t_1\le t_2$. The verifications are
left for the reader.

\par

\begin{prop}
Let $\Phi$ be a Young function. Then
the following conditions are equivalent:
\begin{enumerate}
\item $\Phi$ satisfies the
$\Delta _2$-condition;

\vrum

\item for every constant $c>0$, the Young function
$t\mapsto \Phi (ct)$ is equivalent to $\Phi$;

\vrum

\item for some constant $c>0$ with $c\neq 1$,
the Young function
$t\mapsto \Phi (ct)$ is equivalent to $\Phi$.
\end{enumerate}
\end{prop}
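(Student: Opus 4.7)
The plan is to establish the cycle (1) $\Rightarrow$ (2) $\Rightarrow$ (3) $\Rightarrow$ (1), with the monotonicity of Young functions and iteration of the $\Delta_2$-inequality as the only real tools; the implication (2) $\Rightarrow$ (3) is immediate, so the real work is in the other two.

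For (1) $\Rightarrow$ (2), suppose $\Phi(2t)\leqslant C\Phi(t)$. I would iterate this to obtain $\Phi(2^k t)\leqslant C^k\Phi(t)$ for every $k\in\mathbf{N}$. Given a fixed $c>0$, choose $k\in\mathbf{N}$ with $2^{-k}\leqslant c\leqslant 2^k$. If $c\geqslant 1$, monotonicity gives $\Phi(t)\leqslant\Phi(ct)$, while $\Phi(ct)\leqslant\Phi(2^k t)\leqslant C^k\Phi(t)$ yields the reverse inequality up to the constant $C^k$. If $c<1$, then $\Phi(ct)\leqslant\Phi(t)$ by monotonicity, and after the substitution $s=ct$ the $\Delta_2$-iteration $\Phi(s/c)\leqslant\Phi(2^k s)\leqslant C^k\Phi(s)$ provides the missing estimate. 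Both cases produce the equivalence of $\Phi$ and $\Phi(c\cdot)$.

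For (3) $\Rightarrow$ (1), assume some $c>0$ with $c\ne 1$ makes $\Phi(c\cdot)$ equivalent to $\Phi$, so $\Phi(ct)\leqslant C\Phi(t)$ (after possibly replacing $c$ by $1/c$, which is allowed because the equivalence is symmetric; this reduces everything to the case $c>1$). I would then iterate: $\Phi(c^k t)\leqslant C^k\Phi(t)$ for every $k\in\mathbf{N}$. Picking $k$ large enough so that $c^k\geqslant 2$, monotonicity gives $\Phi(2t)\leqslant\Phi(c^k t)\leqslant C^k\Phi(t)$, which is the $\Delta_2$-condition with constant $C^k$.

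The only point requiring care, which I expect to be the mildest obstacle rather than a serious one, is the symmetrisation step in (3) $\Rightarrow$ (1): one must verify that equivalence of $\Phi(c\cdot)$ with $\Phi$ is inherited by $\Phi(c^{-1}\cdot)$, so that the case $0<c<1$ can be reduced to $c>1$. This is immediate from substituting $t\mapsto t/c$ in the equivalence inequality. After that, everything reduces to iterating a single scalar inequality and invoking monotonicity, exactly as the authors indicate when they leave the verification to the reader.
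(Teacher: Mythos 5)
Your proof is correct, and since the paper leaves the verification to the reader with only the hint that it follows from monotonicity of Young functions, your argument (iterating the scaling inequality and using monotonicity, with the symmetrisation $c\mapsto c^{-1}$ handled by the substitution $t\mapsto t/c$) is exactly the intended one.
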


\par

For any Young function $\Phi$, let $\Omega$
be the set
\begin{equation}\label{eq:OmegaDef}
\Omega =\Omega _\Phi \equiv
\sets {t>0}{0<\Phi (t)<\infty}
\end{equation}
and let the Lebesgue 
exponents $p_\Phi$ and $q_\Phi$ be given by
\begin{align}
p_{\Phi}
&\equiv
\begin{cases}
{\displaystyle{\sup_{t\in \Omega}
\left (\frac{t \Phi _\pm '(t)}{\Phi(t)}\right )}},
& \Omega=\mathbf{R}_+,
\\[1ex]
\infty, & \Omega\neq \mathbf{R}_+,   
\end{cases}
\label{Eq:StrictYoungFunc2}
\intertext{where $\mathbf{R}_+=(0,\infty)$, and}
q_{\Phi}
&\equiv
\begin{cases}
{\displaystyle{\inf_{t\in \Omega}
\left (\frac{t \Phi _\pm '(t)}{\Phi(t)}\right )}},
& \Omega\neq \emptyset,
\\
\infty, & \Omega = \emptyset.
\end{cases}
\label{Eq:StrictYoungFunc1}
\end{align}
These exponents
are essential in several investigations
of Orlicz spaces and are often called the
Simonenko indices (see e.{\,}g.
\cite{Mal1,Mal2,Liu}).
Here $\Phi _+'$ and $\Phi _-'$
are the right and left derivatives 
respectively.
% , in
% those $t\ge 0$ where they are defined.
Since $\Phi$ is convex, it follows
that these derivatives are
well-defined on $\Omega$.
We also put
$$
\Phi '_+(t_1)=\Phi '_-(t_2)=\infty 
\quad \text{when}\quad
t_2>t_1\ge t_0
\equiv
\sup \sets {t\ge 0}{\Phi (t)<\infty }. 
%
% \Phi '_+(b)=\lim _{h\to 0+}
% \frac {\Phi (b+h)-\Phi (b)}h =\infty,
% \quad \text{when}\quad
% \Omega =(a,b],
$$
Evidently, if the left limit $\Phi (t_0^-)$ 
of $\Phi$ at $t_0$ is finite,
then $\Phi '_-(t_0)$ is well-defined
and finite. If instead
$\Phi (t_0^-)=\infty$, then we let
$$
\Phi '_-(t_0)= \infty 
\quad
\left (
=\lim _{h\to 0^-}\frac 
{\Phi (t_0+h)-\Phi (t_0^-)}{h}
\right ).
$$
% for some $a\ge 0$.

% Here we put
% $$
% \Phi '_+(b)=\lim _{h\to 0+}
% \frac {\Phi (b+h)-\Phi (b)}h =\infty,
% \quad \text{when}\quad
% \Omega =(a,b],
% $$
% for some $a\ge 0$.

\begin{rem}
We have
%Equivalently, we can define $p_\Phi$ as
%%
\begin{equation}
\tag*{(\ref{Eq:StrictYoungFunc2})$'$}
%\label{eq:equivp}
p_\Phi = \begin{cases}
{\displaystyle{\sup_{t\in \Omega}
\left (\frac{t \Phi _\pm '(t)}{\Phi(t)}\right )}},
& |\Omega|=\infty,
\\[1ex]
\infty, & |\Omega|<\infty.  
\end{cases}
\end{equation}
%%
% This is because $p_\Phi = \tilde{p}_\Phi$ for all 
% Young functions $\Phi$.
\end{rem}
% For any Young function $\Phi$, the Lebesgue 
% exponents
% %%
% \begin{align}
% p_{\Phi}
% &\equiv
% \sup_{t\in \Omega}
% \left (\frac{t \Phi _+'(t)}{\Phi(t)}\right )
% =
% \sup_{t\in \Omega}
% \left (\frac{t \Phi _-'(t)}{\Phi(t)}\right )
% \label{Eq:StrictYoungFunc2}
% \intertext{and}
% q_{\Phi}
% &\equiv
% \inf _{t\in \Omega}
% \left (\frac{t \Phi _+'(t)}{\Phi(t)}\right )
% =
% \inf _{t\in \Omega}
% \left (\frac{t \Phi _-'(t)}{\Phi(t)}\right ),
% \label{Eq:StrictYoungFunc1}
% \end{align}
% %%
% often called the Simonenko indices,
% are essential in several investigations
% of Orlicz spaces (see e.{\,}g.
% \cite{Mal1,Mal2,Liu}). Here, $\Omega$
% is the set
% $$
% \Omega =\Omega _\Phi \equiv
% \sets {t>0}{0<\Phi (t)<\infty}.
% $$
% Evidently, the fractions in \eqref{Eq:StrictYoungFunc2}
% and \eqref{Eq:StrictYoungFunc1} are well-defined
% on $\Omega$. For the sake of convenience, we set
% $$
% p_\Phi =q_\Phi = \infty
% \quad \text{when}\quad
% \Omega =\emptyset .
% $$
% For any Young function $\Phi$, the \emph{upper} and \emph{lower Lebesgue exponents} 
% are defined by
% %%
% \begin{align}
% p_{\Phi}
% &\equiv
% \sup_{t>0} \left (\frac{t \Phi _+'(t)}{\Phi(t)}\right )
% =
% \sup_{t>0} \left (\frac{t \Phi _-'(t)}{\Phi(t)}\right )
% \label{Eq:StrictYoungFunc2}
% \intertext{and}
% q_{\Phi}
% &\equiv
% \inf _{t>0} \left (\frac{t \Phi _+'(t)}{\Phi(t)}\right )
% =
% \inf _{t>0} \left (\frac{t \Phi _-'(t)}{\Phi(t)}\right ),
% \label{Eq:StrictYoungFunc1}
% \end{align}
% %%
% respectively. We recall that these 
% exponents are
% essential in the analysis in \cite{Liu}.
When $p_\Phi < \infty$, we observe that
for any $r_1,r_2>0$,
\begin{alignat}{3}
t^{p_\Phi}&\lesssim \Phi (t)
\lesssim t^{q_\Phi} &
\quad &\text{when} &\quad t&\leqslant r_1
\label{Eq:Squeezing1}
\intertext{and}
t^{q_\Phi}&\lesssim \Phi (t)\lesssim t^{p_\Phi} &
\quad &\text{when} &\quad
t&\geqslant r_2.
\label{Eq:Squeezing2}
\end{alignat}
In order to shed some light on this, as well as
demonstrate arguments used in the next section, we
here show these relations.

\par

By \eqref{Eq:StrictYoungFunc2} we obtain
$$
\frac {t\Phi '_+(t)}{\Phi (t)}-p_\Phi \leqslant 0
\quad \iff \quad
\left (
\frac {\Phi (t)}{t^{p_\Phi}}
\right ) '_{\!\! +}\leqslant 0.
$$
Hence $\Phi (t) = t^{p_\Phi}h(t)$ for some decreasing function $h(t)>0$.
This gives
$$
\Phi (t) = t^{p_\Phi}h(t)\geqslant t^{p_\Phi}h(r_1)\gtrsim t^{p_\Phi}
$$
for $t\leqslant r_1$ and 
$$
\Phi (t) = t^{p_\Phi}h(t)\leqslant t^{p_\Phi}h(r_2)\lesssim t^{p_\Phi}
$$
for $t\geqslant r_2$. This shows the relations between $t^{p_\Phi}$
and $\Phi (t)$ in \eqref{Eq:Squeezing1} and \eqref{Eq:Squeezing2}.
The remaining relations follow in similar ways.

\par

In our investigations we need to assume that
our Young functions are \emph{strict}
in the following sense.

\par

\begin{defn}\label{Def:StrictYoungFunc}
The Young function $\Phi$ from $[0,\infty)$ to $[0,\infty]$
is called \emph{strict} or a \emph{strict Young function},
if
\begin{enumerate}
  \item $\Phi (t)<\infty$ for every $t\in [0,\infty )$,

  \vrum

  \item $\Phi$ satisfies the $\Delta _2$-condition,

  \vrum

  \item $\Phi$ satisfies the $\Lambda$-condition. %\eqref{qphiCond}.
 %$\Phi$ is strictly convex.
\end{enumerate}
\end{defn}

\par

In Section \ref{sec2} we give various kinds
of characterizations of the conditions (2)
and (3) in Definition \ref{Def:StrictYoungFunc}.
In particular we show that (2) and (3)
in Definition \ref{Def:StrictYoungFunc} are
equivalent to $p_\Phi <\infty$ and $q_\Phi >1$,
respectively.
(See Proposition \ref{Prop:qPhiEquivCond}.)

\par

It will also be useful to rely on regular Young functions, which
is possible due to the following proposition.

\par

\begin{prop}\label{Prop:SmoothEquivYoungFunc}
Let $\Phi$ be a Young function which satisfies the
$\Delta _2$-condition. For every $c\in (0,1)$,
there is a Young function
$\Psi$ such that the following is true:
\begin{enumerate}
\item $\Psi$ is equivalent to $\Phi$ and $\Psi \leqslant \Phi$;

\vrum

\item $\Psi$ is smooth on $\mathbf R_+$;

\vrum

\item $c\, \Phi _+'(0)\leqslant \Psi _+'(0)\leqslant \Phi _+'(0)$;
\vrum

\item $q_\Phi \le q_\Psi$.

\end{enumerate}
\end{prop}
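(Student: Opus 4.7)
My plan is to average dilates of $\Phi$ against a smooth weight and add a linear correction to recover the correct derivative at the origin. Fix $\eta \in C_c^\infty((1/2,1))$ with $\eta \geq 0$ and $\int_0^1 \eta(\gamma)\,d\gamma = 1$, and set $m := \int_0^1 \gamma\,\eta(\gamma)\,d\gamma \in (1/2, 1)$. I then define
\[
\Psi(t) \ := \ \int_0^1 \Phi(\gamma t)\,\eta(\gamma)\,d\gamma \ + \ (1-m)\,\Phi_+'(0)\,t, \qquad t \geq 0.
\]
Note that $\Phi_+'(0) = \lim_{t\to 0^+}\Phi(t)/t$ is finite because $\Delta_2$ forces $\Phi$ to be real-valued on $[0,\infty)$.

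Most of the verifications are routine. Convexity of $\Psi$ is inherited from $\Phi$ together with the linear summand, and $\Psi(0) = 0$. Since $\operatorname{supp}\eta \subset [1/2,1]$ and $\Phi$ is non-decreasing, $\Psi(t) \geq \Phi(t/2)$, which both forces $\Psi(t) \to \infty$ and, combined with the $\Delta_2$ estimate $\Phi(t) \leq C\,\Phi(t/2)$, yields $\Phi \leq C\,\Psi$; so $\Psi$ and $\Phi$ are equivalent. For $\Psi \leq \Phi$, using that $\Phi_+'$ is non-decreasing with $\Phi_+'(s)\geq\Phi_+'(0)$, one has
\[
\Phi(t) - \Phi(\gamma t) \ = \ \int_{\gamma t}^{t}\Phi_+'(s)\,ds \ \geq \ (1-\gamma)\,t\,\Phi_+'(0),
\]
and integrating against $\eta(\gamma)\,d\gamma$ gives $\Phi(t) - \int_0^1 \Phi(\gamma t)\eta(\gamma)\,d\gamma \geq (1-m)\,t\,\Phi_+'(0)$, which is exactly what cancels the linear correction. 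Finally, since $\Phi(\gamma t)/t = \gamma\,\Phi(\gamma t)/(\gamma t) \to \gamma\,\Phi_+'(0)$ as $t\to 0^+$, dominated convergence produces $\Psi_+'(0) = m\,\Phi_+'(0) + (1-m)\,\Phi_+'(0) = \Phi_+'(0)$.

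The hard part is smoothness on $(0,\infty)$, because $\Phi_+'$ may have jumps. The trick is to re-express derivatives of $F(t) := \int_0^1 \Phi(\gamma t)\,\eta(\gamma)\,d\gamma$ without ever referring to $\Phi_+'$. After the substitution $u = \gamma t$, integration by parts, exploiting $\eta(0) = \eta(1) = 0$, gives
\[
F'(t) \ = \ -\,\frac{1}{t}\int_0^1 \Phi(\gamma t)\,\eta_1(\gamma)\,d\gamma, \qquad \eta_1(\gamma) = \bigl(\gamma\,\eta(\gamma)\bigr)',
\]
and $\eta_1 \in C_c^\infty((1/2,1))$ again. Setting $\eta_0 := \eta$, $\eta_{k+1} := (\gamma\,\eta_k)'$, and $\Psi_k(t) := \int_0^1 \Phi(\gamma t)\,\eta_k(\gamma)\,d\gamma$, the same calculation yields the recursion $\Psi_k'(t) = -\Psi_{k+1}(t)/t$ for every $k \geq 0$. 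Continuity of each $\Psi_k$ on $(0,\infty)$ is immediate from continuity of $\Phi$ and dominated convergence, so a double induction on $k$ and on the order of differentiation upgrades all the $\Psi_k$ to $C^\infty((0,\infty))$. In particular $F$, and therefore $\Psi$, lies in $C^\infty(\mathbf R_+)$.
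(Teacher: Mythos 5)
Your argument is correct, and its core is the same as the paper's: both constructions smooth $\Phi$ by averaging the dilates $\Phi(\gamma t)$, $\gamma\in[1/2,1]$, against a smooth compactly supported weight (the paper writes this as $\int_0^1\Phi(t-\tfrac12 st)\phi(s)\,ds$), obtain $\Psi\leqslant\Phi$ from monotonicity of $\Phi$, equivalence from $\Phi(t)\leqslant C\Phi(t/2)\leqslant C\Psi(t)$ via the $\Delta_2$-condition, and smoothness by rewriting the average so that $t$-derivatives fall on the weight rather than on $\Phi$ (the paper's substitution $u=\gamma t$; your recursion $\Psi_k'(t)=-\Psi_{k+1}(t)/t$ with $\eta_{k+1}=(\gamma\eta_k)'$ makes that step fully explicit and rigorous). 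Where you genuinely depart from the paper is item (3): a plain average of dilates gives $\Psi_+'(0)=\Phi_+'(0)\int_0^1\gamma\,\eta(\gamma)\,d\gamma$, which is strictly smaller than $\Phi_+'(0)$ whenever $\Phi_+'(0)>0$, so the paper's $\Psi$ as written does not visibly satisfy $\Psi_+'(0)=\Phi_+'(0)$; your added linear term $(1-m)\Phi_+'(0)\,t$, combined with the lower bound $\Phi(t)-\Phi(\gamma t)\geqslant(1-\gamma)t\,\Phi_+'(0)$, repairs exactly this point while preserving $\Psi\leqslant\Phi$, convexity, the Young function properties and equivalence. Two minor remarks: the identity $F'(t)=-t^{-1}\int_0^1\Phi(\gamma t)\,\eta_1(\gamma)\,d\gamma$ is really differentiation under the integral sign in the $u=\gamma t$ form (no boundary terms arise since $\eta$ is supported in $(1/2,1)$, so the mention of integration by parts is inessential), and the finiteness of $\Phi_+'(0)$ you invoke is indeed justified because $\Delta_2$ forces $\Phi$ to be finite-valued, whence $\Phi(t)/t$ is bounded and non-decreasing near $0$.
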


% \begin{prop}\label{Prop:SmoothEquivYoungFunc}
% Let $\Phi$ be a Young function which satisfies the
% $\Delta _2$-condition. Then there is a Young function
% $\Psi$ such that the following is true:
% \begin{enumerate}
% \item $\Psi$ is equivalent to $\Phi$ and $\Psi \leqslant \Phi$;

% \vrum

% \item $\Psi$ is smooth on $\mathbf R_+$;

% \vrum

% \item $\Psi _+'(0)=\Phi _+'(0)$.
% \end{enumerate}
% \end{prop}

\par

\begin{proof}
Let $\phi \in C_0^\infty [0,1]$ be such that
$\phi \geqslant 0$ and
$\int _0^1 \phi (s)\, ds=1$. Put
\begin{equation}\label{Eq:SmoothYoungFuncDef}
\Psi (t) = \int _0^1 \Phi (t-\textstyle{\frac 12}st)
\phi (s)\, ds.
\end{equation}
By straightforward computations it follows that $\Psi$ is a Young function. Let $C\ge1$ be as in
\eqref{Eq:Delta2Cond}. Since $\Phi$ is increasing
we obtain
$$
\frac 1C\Phi (t) \leqslant \Phi ({\textstyle{\frac 12}}t)
\leqslant
\int _0^1 \Phi (t-\textstyle{\frac 12}st)
\phi (s)\, ds
=
\Psi (t)
$$
and
$$
\Psi (t) = \int _0^1 \Phi (t-{\textstyle{\frac 12}}st)
\phi (s)\, ds
\leqslant
\int _0^1 \Phi (t)
\phi (s)\, ds = \Phi (t),
$$
and (1) follows.

\par

By \eqref{Eq:SmoothYoungFuncDef} we obtain
$$
\Psi (t) = \int _{t/2}^t \Phi (s)
\phi (2-2s/t)\frac 2t\, ds,
$$
which is evidently smooth on $\mathbf R_+$. This gives
(2).

\par

By differentiating \eqref{Eq:SmoothYoungFuncDef}
we obtain
$$
\Psi '_+(t) = \int _0^1
\Phi '_+(t-{\textstyle{\frac 12}}st)
(1-{\textstyle{\frac 12}}s)\phi (s)\, ds
\leqslant
\int _0^1 \Phi '_+(t)
\phi (s)\, ds = \Phi '_+(t).
$$
On the other hand, by choosing the support of $\phi$
to be sufficiently close to origin, we obtain
$$
\Psi '_+(0) = \int _0^1 \Phi '_+(0)
(1-\textstyle{\frac 12}s)\phi (s)\, ds
\geqslant
c\Phi '_+(0),
$$
and (3) follows.

\par

By \eqref{Eq:SmoothYoungFuncDef}
we also get
$$
t\Psi '_+(t) = \int _0^1
(t-{\textstyle{\frac 12}}st)
\Phi '_+
(t-{\textstyle{\frac 12}}st)
\phi (s)\, ds
\geqslant
q_\Phi \int _0^1
\Phi 
(t-{\textstyle{\frac 12}}st)
\phi (s)\, ds
=
q_\Phi \Psi (t),
$$
which implies that
$$
q_\Phi \leqslant \frac {t\Psi '_+(t)}{\Psi (t)}.
$$
The assertion (4) now follows
by taking the infimum of the
right-hand side.
\end{proof}

% \begin{proof}
% Let $\phi \in C_0^\infty [0,1]$ be such that $\phi \geqslant 0$ and
% $\int _0^1 \phi (s)\, ds=1$. Put
% $$
% \Psi (t) = \int _0^1 \Phi (t-\textstyle{\frac 12}st)\phi (s)\, ds.
% $$
% Then using this formula and
% $$
% \Psi (t) = \int _{t/2}^t \Phi (s)\phi (s-2s/t)\frac ts\, ds,
% $$
% we reach the result.
% \end{proof}

\par

It follows that $\Psi$ in Proposition
\ref{Prop:SmoothEquivYoungFunc} fulfills the
$\Delta _2$-condition, because $\Phi$ satisfies
that condition and $\Psi$ is equivalent to $\Phi$.

\par

\begin{defn}
Let $\Phi $ be a Young function.
The \emph{Orlicz space} $L^{\Phi }(\rr d)$ consists
of all Lebesgue measurable functions
$f:\rr d \rightarrow
\mathbf C$ such that
$$
    \Vert f\Vert_{L^{\Phi}}
    \equiv
    \inf 
    \Sets{\lambda>0}{\int_{\rr d} \Phi 
    \left (
    \frac{|f(x)|}{\lambda}
    \right )
    dx\leqslant 1}
$$
is finite.
\end{defn}

\par

\begin{defn}
Let $\Phi $ be a Young function.
The \emph{weak Orlicz space} $wL^{\Phi }(\rr d)$ consists
of all Lebesgue measurable functions
$f:\rr d \rightarrow
\mathbf C$ such that 
$$
\nm f{wL^{\Phi}}
\equiv
\inf \Sets { \lambda >0}
{\sup_{t>0} \left (
\Phi \left(\frac{t}{\lambda}\right)
\mu _f(t) \right ) \leqslant 1}
$$  
is finite. Here $\mu _f(t)$
is the Lebesgue measure of the set
$\sets {x\in \rr d}{|f(x)|>t}$.
\end{defn}

\par

As with the usual Lebesgue spaces,
$f,g\in wL^{\Phi}(\rr d)$ are equivalent
whenever $f=g$ a.{\,}e.

\par

\begin{rem}\label{Rem:EquivPhi}
Suppose that $p\in [1,\infty ]$. Then recall
from the introduction that
$L^{\Phi _{[p]}}(\rr d)=L^p(\rr d)$, when
$$
\Phi _{[p]}(t)= \frac {t^p}p, \ p<\infty 
\quad \text{and}\quad
\Phi _{[\infty ]}(t)
=
\begin{cases}
0, & 0\leqslant t \leqslant 1,
\\[1ex]
\infty, & t> 1.
\end{cases}
$$
% Let $p\in [1,\infty )$,
% $$
% \Phi _{[p]}(t)= \frac {t^p}p, \ p<\infty ,
% \quad \text{and}\quad
% \Phi _{[\infty ]}(t)
% =
% \begin{cases}
% 0, & 0\leqslant t <1,
% \\[1ex]
% \infty, & t\geqslant 1.
% \end{cases}
% $$
% We recall
% from the introduction that
% $L^{\Phi _{[p]}}(\rr d)=L^p(\rr d)$.
Moreover, suppose
$p_1,p_2\in [1,\infty]$ and let
$$
\Phi (t)
=
\begin{cases}
{\frac {t^{p_2}}{p_2}}, & 0\leqslant t\leqslant 1,
\\[1ex]
{\frac {t^{p_1}}{p_1} +\frac 1{p_2}-\frac 1{p_1}},
& t> 1.
\end{cases}
$$
Here, we interpret
$\frac {t^\infty}{\infty}$
as
$$
\frac {t^\infty}{\infty}
\equiv
\lim _{p\to \infty}\frac {t^p}p
=
\begin{cases}
0, & 0\leqslant t \leqslant 1,
\\[1ex]
\infty , & t>1.
\end{cases}
$$
Then $\Phi$ is a Young function,
\begin{alignat*}{2}
L^\Phi (\rr d) &= L^{p_1}(\rr d)+L^{p_2}(\rr d),
&\quad p_1 &\leqslant p_2,
\intertext{and}
L^\Phi (\rr d)
&=
L^{p_1}(\rr d)\cap L^{p_2}(\rr d),&
\quad p_2 &\leqslant p_1.
\end{alignat*}

\par

In particular, for a general Young function
$\Phi$, it
follows from \eqref{Eq:Squeezing1}
and \eqref{Eq:Squeezing2} that
\begin{equation}\label{Eq:OrlLpInclusions}
L^{p_\Phi}(\rr d)
\cap
L^{q_\Phi}(\rr d)
\subseteq
L^\Phi (\rr d)
\subseteq
L^{p_\Phi}(\rr d)
+
L^{q_\Phi}(\rr d).
\end{equation}
\end{rem}

\par

\begin{rem}
Notice that the assignment $p_\Phi=\infty$ when $|\Omega|<\infty$ in \eqref{Eq:StrictYoungFunc2}$'$ is justified by \eqref{Eq:Squeezing2}. The assignment $q_\Phi=\infty$ when $\Omega=\emptyset$ in \eqref{Eq:StrictYoungFunc1} is justified by the observations in Remark \ref{Rem:EquivPhi} and the fact that for $\Phi(t)=\Phi_{[p]}(t)= \frac {t^p}{p}$, we have
$$
q_\Phi=\frac{t \Phi'(t)}{\Phi(t)} =p \to \infty, \quad \text{as } p \to \infty
$$ 
\end{rem}

% \begin{defn}
% Let $\Phi $ be a Young function.
% The \emph{weak Orlicz space} $wL^{\Phi }(\rr d)$ consists
% of all Lebesgue measurable functions
% $f:\rr d \rightarrow
% \mathbf C$ such that 
% %$\exists c>0: 
% $$
% \sup_{t>0} 
% \left (
% \Phi
% \left (\frac t\lambda \right ) \mu _f(t)
% \right )
% < \infty
% $$
% for some $\lambda >0$. Here $\mu _f(t)$
% is the Lebesgue measure of
% $
% \sets {x\in \rr d}{|f(x)>t|}.
% $
% Set
% $$
% \nm f{wL^{\Phi}}
% \equiv
% \inf \Sets { \lambda >0}
% {\sup_{t>0} \left (
% \Phi \left(\frac{t}{\lambda}\right)
% \mu _f(t) \right ) \leqslant 1}.
% $$  
% \end{defn}

\par

%%%
\subsection{Pseudo-differential operators}\label{subsec1.2}
%%%

\par

Let $\GL (d,\Omega )$ be the set of all
$d\times d$-matrices with
entries in the set $\Omega$, and let $a\in \mascS 
(\rr {2d})$ and $A\in \GL (d,\mathbf R)$ be fixed.
Then the pseudo-differential operator $\op _A(a)$
is the linear and
continuous operator on $\mascS  (\rr d)$, given by
\begin{equation}\label{e0.5}
(\op _A(a)f)(x)
=
(2\pi  ) ^{-d}\iint a(x-A(x-y),\xi )f(y)
e^{i\scal {x-y}\xi }\, dyd\xi ,
\end{equation}
when $f\in \mascS(\rr d)$. For
general $a\in \mascS '(\rr {2d})$, the
pseudo-differential operator $\op _A(a)$ is defined as the linear and
continuous operator from $\mascS (\rr d)$ to $\mascS '(\rr d)$ with
distribution kernel given by
\begin{equation}\label{atkernel}
K_{a,A}(x,y)=(2\pi )^{-\frac d2}
(\mascF _2^{-1}a)(x-A(x-y),x-y).
\end{equation}
Here $\mascF _2F$ is the partial Fourier transform of $F(x,y)\in
\mascS '(\rr {2d})$ with respect to the $y$ variable. We observe that $K_{a,A}$ in
\eqref{atkernel} makes sense as an
element in $\mascS '(\rr {2d})$
when $a\in \mascS '(\rr {2d})$,
since the mappings
\begin{equation}\label{homeoF2tmap}
\mascF _2\quad \text{and}\quad F(x,y)\mapsto F(x-A(x-y),x-y)
\end{equation}
are homeomorphisms on $\mascS '(\rr {2d})$.
In particular, the map $a\mapsto K_{a,A}$ is a homeomorphism on
$\mascS '(\rr {2d})$.

\par

An important special case appears when $A=t\cdot I$, with
$t\in \mathbf R$. Here and in what follows, $I\in \GL (d,\mathbf R)$ denotes
the $d\times d$ identity matrix. In this case we set
$$
\op _t(a) = \op _{t\cdot I}(a).
$$
The normal or Kohn-Nirenberg representation, $a(x,D)$, is obtained
when $t=0$, and the Weyl quantization, $\op ^w(a)$, is obtained
when $t=\frac 12$. That is,
$$
a(x,D) = \op _0(a)
\quad \text{and}\quad \op ^w(a) = \op _{1/2}(a).
$$

\par

For any $K\in \mascS '(\rr {d_1+d_2})$, we let $T_K$ be the
linear and continuous mapping from $\mascS (\rr {d_1})$
to $\mascS '(\rr {d_2})$, defined by the formula
\begin{equation}\label{pre(A.1)}
(T_Kf,g)_{L^2(\rr {d_2})} = (K,g\otimes \overline f )_{L^2(\rr {d_1+d_2})}.
\end{equation}
It is well-known that if $A\in \GL (d,\mathbf R)$, then it follows from Schwartz kernel
theorem that $K\mapsto T_K$ and $a\mapsto \op _A(a)$ are bijective
mappings from $\mascS '(\rr {2d})$
to the set of linear and continuous mappings from $\mascS (\rr d)$ to
$\mascS '(\rr d)$ (cf. e.{\,}g. \cite{Ho1}).

\par

In particular, for every $a_1\in \mascS  '(\rr {2d})$ and $A_1,A_2\in
\GL (d,\mathbf R)$, there is a unique $a_2\in \mascS  '(\rr {2d})$ such that
$\op _{A_1}(a_1) = \op _{A_2} (a_2)$. The following result explains the
relations between $a_1$ and $a_2$.

\par

\begin{prop}\label{Prop:CalculiTransfer}
Let $a_1,a_2\in \mascS '(\rr {2d})$ and $A_1,A_2\in \GL (d,\mathbf R)$.
Then
\begin{equation*} %\label{calculitransform}
\op _{A_1}(a_1) = \op _{A_2}(a_2) \quad \iff \quad
e^{i\scal {A_2D_\xi}{D_x }}a_2(x,\xi )=e^{i\scal {A_1D_\xi}{D_x }}a_1(x,\xi ).
\end{equation*}
\end{prop}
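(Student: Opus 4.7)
The plan is to reduce the biconditional to the uniqueness of the Kohn--Nirenberg (i.e.\ $\op_0$) representation. Concretely, I would prove the auxiliary claim that for every $A\in \GL(d,\mathbf{R})$ and every $a\in \mascS'(\rr{2d})$, one has
\begin{equation*}
\op_A(a) \;=\; \op_0(c),\qquad c \;=\; e^{i\scal{AD_\xi}{D_x}} a .
\end{equation*}
Given the claim, both sides of the stated biconditional translate to the statement that the two distributions $e^{i\scal{A_j D_\xi}{D_x}}a_j$, $j=1,2$, correspond to the same operator under the Kohn--Nirenberg quantization; since $a\mapsto \op_0(a)$ is a bijection onto continuous maps $\mascS(\rd)\to\mascS'(\rd)$ (the Schwartz kernel theorem, cited right before the statement), this is equivalent to the equality of these two distributions, which is exactly the right-hand side of the proposition.

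To prove the auxiliary claim I would compare Schwartz kernels. By \eqref{atkernel},
$$
K_{a,A}(x,y) = (2\pi)^{-d/2}(\mascF_2^{-1}a)(x-A(x-y),x-y),\qquad K_{c,0}(x,y)=(2\pi)^{-d/2}(\mascF_2^{-1}c)(x,x-y),
$$
so after the change of variables $z=x-y$ the equality of kernels becomes
$$
(\mascF_2^{-1}c)(x,z) \;=\; (\mascF_2^{-1}a)(x-Az,\,z).
$$
Applying $\mascF_2$ in $z$ to both sides, and using that the translation $u\mapsto u-Az$ in the first argument corresponds on the Fourier side to a sheared phase factor, one reaches
$$
\widehat{c}(\zeta,z)\;=\;e^{i\scal{Az}{\zeta}}\widehat{a}(\zeta,z),
$$
where $\widehat{\cdot}$ denotes the full Fourier transform on $\rdd$. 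This is precisely $c = e^{i\scal{AD_\xi}{D_x}}a$, interpreting the exponential operator as the Fourier multiplier $\mascF^{-1}\!\circ M_{e^{i\scal{Az}{\zeta}}}\!\circ\mascF$. I would first carry out this computation for $a\in\mascS(\rdd)$, where every manipulation is a genuine Fubini/change-of-variable, and then extend to $a\in\mascS'(\rdd)$ by $\mascS'$-continuity of each step (Proposition before the statement already notes that $\mascF_2$ and the shear $F(x,y)\mapsto F(x-A(x-y),x-y)$ are homeomorphisms on $\mascS'(\rdd)$).

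The main technical obstacle is giving a rigorous meaning to $e^{i\scal{AD_\xi}{D_x}}$ on $\mascS'(\rdd)$: the multiplier $e^{i\scal{Az}{\zeta}}$ is smooth of modulus one but has derivatives that grow polynomially to any order, so it is not a Schwartz multiplier in the pointwise sense. I would handle this by defining $e^{i\scal{AD_\xi}{D_x}}$ as the conjugation of multiplication by $e^{i\scal{Az}{\zeta}}$ with $\mascF$, acting first on $\mascS(\rdd)$ (where it coincides with the oscillatory-integral formula $(e^{i\scal{AD_\xi}{D_x}}a)(x,\xi)=(2\pi)^{-d}\iint a(x+Ay,\xi+\eta)e^{-i\scal{y}{\eta}}\,dyd\eta$) and then extending by duality to $\mascS'(\rdd)$, using that the inverse is $e^{-i\scal{AD_\xi}{D_x}}$ so the map is an automorphism. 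Once this formal set-up is in place, the equivalence follows immediately from the auxiliary claim and the injectivity of $a\mapsto \op_0(a)$.
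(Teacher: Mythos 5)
Your argument is correct in substance, and it is essentially the standard proof: the paper itself does not prove Proposition \ref{Prop:CalculiTransfer} but refers to \cite{Toft15} and to the case $A=t\cdot I$ in \cite{Ho1,Sh,Tr}, where exactly your reduction is carried out, namely showing $\op _A(a)=\op _0\bigl(e^{i\scal {AD_\xi}{D_x}}a\bigr)$ by comparing distribution kernels and then invoking the bijectivity of $a\mapsto \op _0(a)$ (Schwartz kernel theorem together with the homeomorphisms \eqref{homeoF2tmap}), with $e^{i\scal {AD_\xi}{D_x}}$ realized as a Fourier multiplier that is a homeomorphism on $\mascS '(\rr {2d})$ with inverse $e^{-i\scal {AD_\xi}{D_x}}$. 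One bookkeeping slip to fix: from $(\mascF _2^{-1}c)(x,z)=(\mascF _2^{-1}a)(x-Az,z)$ the phase factor is produced by the partial Fourier transform in the \emph{first} variable $x$ (the $z$-dependent translation acts in $x$), not by a transform in $z$; with the paper's conventions one gets $(\mascF _1\mascF _2^{-1}c)(\zeta ,z)=e^{-i\scal {Az}{\zeta}}(\mascF _1\mascF _2^{-1}a)(\zeta ,z)$, and since $D_x\mapsto \zeta$ under $\mascF _1$ and $D_\xi \mapsto -z$ under $\mascF _2^{-1}$, this identity is precisely $c=e^{i\scal {AD_\xi}{D_x}}a$. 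Your displayed formula with $\widehat{c}(\zeta ,z)=e^{i\scal {Az}{\zeta}}\widehat{a}(\zeta ,z)$ and the label ``full Fourier transform'' is therefore a convention-level inaccuracy rather than a gap; likewise, the multiplier $e^{i\scal {Az}{\zeta}}$ is smooth with all derivatives of polynomial growth, hence maps $\mascS (\rr {2d})$ into itself continuously, so the duality extension you describe is unproblematic.
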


\par

In \cite{Toft15}, a proof of the previous proposition is given, which is similar to
the proof of the case $A=t\cdot I$ in \cite{Ho1,Sh,Tr}.

\par

Let $r, \rho ,\delta \in \mathbf R$ be such that
$0\leqslant \delta \leqslant \rho \leqslant 1$ and $\delta <1$.
Then we recall that the H{\"o}rmander class
$S^r_{\rho ,\delta}(\rr {2d})$ consists of
all $a\in C^\infty (\rr {2d})$ such that
$$
\sum _{|\alpha |,|\beta |\leqslant N}\sup _{x,\xi \in \rr d}
\left (
\eabs \xi ^{-r+\rho |\alpha |-\delta |\beta |}|D_\xi ^\alpha D_x^\beta a(x,\xi )|
\right )
$$
is finite for every integer $N\geqslant 0$. Here we set
$\eabs \xi\equiv (1+|\xi|^2)^{\frac 12}$
when $\xi\in \rr d$.

\par

We recall the following continuity property 
for pseudo-differential
operators acting on $L^p$-spaces
(see e.{\,}g. \cite{Wong}).

\par

\begin{prop}\label{Prop:LpCont}
Let $p\in (1,\infty )$,
$A\in \GL (d,\mathbf R)$, and
$a\in S^0_{1,0}(\rr {2d})$. Then
$\op _A(a)$ is continuous on $L^p(\rr d)$.
\end{prop}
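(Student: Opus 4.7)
The plan is to reduce the general case to the Kohn--Nirenberg quantization and then appeal to Calder\'on--Zygmund theory. First I would use Proposition \ref{Prop:CalculiTransfer}: given $A\in \GL(d,\mathbf R)$ and $a\in S^0_{1,0}(\rr{2d})$, one can find $a_0\in \mascS'(\rr{2d})$ with $\op_A(a)=\op_0(a_0)$, where $a_0=e^{i\scal{AD_\xi}{D_x}}e^{-i\scal{0\cdot D_\xi}{D_x}}a=e^{i\scal{AD_\xi}{D_x}}a$. A standard oscillatory-integral/asymptotic-expansion argument shows that the operator $e^{i\scal{AD_\xi}{D_x}}$ preserves the H\"ormander class $S^0_{1,0}(\rr{2d})$, so it suffices to prove the proposition for $a(x,D)=\op_0(a)$ with $a\in S^0_{1,0}(\rr{2d})$.

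Next I would establish the $L^2$-boundedness of $a(x,D)$. The cleanest route is the Calder\'on--Vaillancourt theorem, or alternatively a dyadic decomposition $a=\sum_{j\geqslant 0}a_j$ where $a_j$ is localised to a shell $\eabs\xi\sim 2^j$, combined with a Cotlar--Stein almost orthogonality argument to sum the pieces in operator norm on $L^2$. The relevant seminorms of $a$ in $S^0_{1,0}$ control the constants.

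Then I would verify that the distribution kernel
\[
K(x,y)=(2\pi)^{-d}\int e^{i\scal{x-y}{\xi}}a(x,\xi)\,d\xi
\]
is a Calder\'on--Zygmund kernel. Splitting the integral into $|\xi|\leqslant |x-y|^{-1}$ and $|\xi|\geqslant |x-y|^{-1}$, and performing repeated integration by parts in $\xi$ (using $(x-y)\cdot D_\xi e^{i\scal{x-y}\xi}=|x-y|^2 e^{i\scal{x-y}\xi}$) on the high-frequency piece, one obtains
\[
|\partial_x^\alpha \partial_y^\beta K(x,y)|\lesssim |x-y|^{-d-|\alpha|-|\beta|},\qquad x\neq y,
\]
which in particular yields the size estimate $|K(x,y)|\lesssim|x-y|^{-d}$ and the gradient estimate $|\nabla_{x,y}K(x,y)|\lesssim|x-y|^{-d-1}$ off the diagonal.

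Finally, the $L^2$-boundedness combined with the Calder\'on--Zygmund kernel estimates yields, via the Calder\'on--Zygmund theorem, the weak type $(1,1)$ bound; Marcinkiewicz interpolation then gives continuity on $L^p(\rr d)$ for $1<p\leqslant 2$, and duality (using that the formal adjoint of $\op_0(a)$ equals $\op_1(\overline a)$, whose symbol again lies in $S^0_{1,0}$ after transfer to $\op_0$) extends the range to $2\leqslant p<\infty$. The main obstacle is the $L^2$-boundedness step, since the kernel and transfer estimates are essentially routine integration-by-parts computations, while $L^2$-continuity requires either Calder\'on--Vaillancourt or a delicate almost-orthogonal decomposition.
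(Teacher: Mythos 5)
The paper does not actually prove this proposition: it is recalled from the literature (see the sentence preceding it, with the reference \cite{Wong}), so there is no internal argument to compare against. Your outline is the standard Calder\'on--Zygmund proof and it is sound: the reduction to the Kohn--Nirenberg quantization via Proposition \ref{Prop:CalculiTransfer} is legitimate, provided you invoke the (standard but not entirely trivial) symbolic-calculus fact that $e^{i\scal {AD_\xi}{D_x}}$ maps $S^0_{1,0}(\rr{2d})$ into itself; $L^2$-boundedness follows from Calder\'on--Vaillancourt (which is more than is needed for $S^0_{1,0}$ --- H{\"o}rmander's square-function/compactness argument or a Cotlar--Stein dyadic decomposition would do); the integration-by-parts kernel bounds $|\partial_x^\alpha\partial_y^\beta K(x,y)|\lesssim |x-y|^{-d-|\alpha|-|\beta|}$ off the diagonal are correct (in fact for $|x-y|\geqslant 1$ one gets rapid decay, which is stronger), and they give exactly the size and regularity hypotheses of the Calder\'on--Zygmund theorem; weak type $(1,1)$ plus Marcinkiewicz interpolation covers $1<p\leqslant 2$, and the duality step is justified because the formal adjoint $\op_1(\overline a)$ transfers back to a Kohn--Nirenberg operator with symbol again in $S^0_{1,0}$. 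What your route buys, compared with the paper's bare citation, is a self-contained argument; what it costs is precisely the two nontrivial black boxes you flag (invariance of $S^0_{1,0}$ under change of quantization, and the $L^2$ bound), both of which are classical and can be quoted from \cite{Ho1} or \cite{Wong}.
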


\par

In the next proposition we
essentially
recall H{\"o}rmander's
improvement of Mihlin's
Fourier multiplier theorem.

\par

\begin{prop}\label{Prop:HormMult}
Let $p\in (1,\infty )$ and
$a\in L^\infty (\rr d\setminus 0)$
be such that
\begin{equation}
\sup _{R>0}
\left (
R^{-d+2|\alpha |}\int _{A_R}
|\partial ^\alpha a(\xi )|^2\, d\xi
\right )
\end{equation}
is finite for every $\alpha \in \nn d$
with $|\alpha |\leqslant [\frac d2]+1$,
where $A_R$ is the annulus
$\sets {\xi \in \rr d}{R<|\xi |<2R}$.
Then $a(D)$ is continuous on $L^p(\rr d)$.
\end{prop}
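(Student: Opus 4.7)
The plan is to reduce the statement to Calder\'on--Zygmund theory. Since $a\in L^\infty$, Plancherel's theorem gives boundedness of $a(D)$ on $L^2(\rr d)$. The heart of the matter is then to verify that the convolution kernel $K=\mascF^{-1}a$ satisfies H\"ormander's integral condition
\begin{equation*}
\sup_{y\neq 0}\int_{|x|\geqslant 2|y|}|K(x-y)-K(x)|\, dx<\infty .
\end{equation*}
Once this is in place, the real-variable Calder\'on--Zygmund machinery produces weak-type $(1,1)$ bounds, and Marcinkiewicz interpolation together with duality extends the $L^2$ bound to $L^p$ for every $p\in (1,\infty )$.

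To verify the kernel condition I would perform a Littlewood--Paley decomposition: pick $\psi \in C_c^\infty (\rr d)$ supported in $\{\tfrac 12<|\xi |<2\}$ with $\sum _{j\in \mathbf Z}\psi (2^{-j}\xi )=1$ for $\xi \neq 0$, and set $a_j(\xi )=a(\xi )\psi (2^{-j}\xi )$, $K_j=\mascF ^{-1}a_j$. The support of $a_j$ lies in $\{2^{j-1}<|\xi |<2^{j+1}\}$, so Leibniz's rule combined with the hypothesis gives, for every $|\alpha |\leqslant N:=[d/2]+1$,
\begin{equation*}
\Nm {\partial ^\alpha a_j}{L^2}^2\lesssim 2^{j(d-2|\alpha |)},
\end{equation*}
and Plancherel translates this into the moment bounds $\Nm {x^\alpha K_j}{L^2}\lesssim 2^{j(d/2-|\alpha |)}$. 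Writing $\varepsilon =N-d/2>0$, splitting the relevant integrals at the scale $|x|\sim 2^{-j}$ and applying Cauchy--Schwarz yields the uniform estimates
\begin{equation*}
\Nm {K_j}{L^1}\lesssim 1,\qquad \int _{|x|>R}|K_j(x)|\, dx\lesssim (2^jR)^{-\varepsilon },\qquad \Nm {\nabla K_j}{L^1}\lesssim 2^j,
\end{equation*}
the last following from the same reasoning applied to $\nabla K_j=\mascF ^{-1}(i\xi a_j)$.

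Given $y\neq 0$, I would choose $j_0\in \mathbf Z$ with $2^{-j_0}\sim |y|$ and split the series $K=\sum _j K_j$ at $j_0$. For $j\leqslant j_0$ the mean value theorem together with the gradient bound gives
\begin{equation*}
\int _{|x|\geqslant 2|y|}|K_j(x-y)-K_j(x)|\, dx\leqslant |y|\,\Nm {\nabla K_j}{L^1}\lesssim 2^j|y|,
\end{equation*}
while for $j>j_0$ the triangle inequality, a change of variable, and the tail bound yield
\begin{equation*}
\int _{|x|\geqslant 2|y|}|K_j(x-y)-K_j(x)|\, dx\lesssim (2^j|y|)^{-\varepsilon }.
\end{equation*}
Summing the two geometric series on either side of $j_0$ produces a constant independent of $y$, establishing the H\"ormander condition.

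The main technical obstacle is the tuning of the parameter $N=[d/2]+1$: on the one hand $2N>d$ is needed for the Cauchy--Schwarz step to converge on the exterior region $\{|x|>2^{-j}\}$, and on the other hand the gap $\varepsilon =N-d/2>0$ is precisely what makes the high-frequency geometric series summable. This is exactly why strictly more than $d/2$ derivatives (in the $L^2$-averaged sense of the hypothesis) must be controlled. Once the kernel condition is in hand, the passage from the $L^2$ bound to $L^p$ continuity for $p\in (1,\infty )$ is classical and can be quoted directly, for instance by combining weak-$(1,1)$ boundedness with the Marcinkiewicz-type interpolation of Liu and Wang that underlies the Orlicz extensions pursued in the subsequent sections.
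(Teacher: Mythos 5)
Your argument is correct in outline, but note that the paper does not prove this proposition at all: it is recalled as a known result (H\"ormander's improvement of Mihlin's theorem, cited from \cite{Ho0}) and then used as a black box in the interpolation argument of Section \ref{sec3}. What you have written is the standard Calder\'on--Zygmund/Littlewood--Paley proof of that classical theorem, and the key quantitative steps check out: the Leibniz computation only uses derivatives $\partial^\beta a$ with $|\beta|\leqslant |\alpha|\leqslant [\frac d2]+1$, so it stays within the hypothesis; the Cauchy--Schwarz splitting at scale $2^{-j}$ needs $2N>d$, which is exactly why $N=[\frac d2]+1$ suffices; and the two geometric series on either side of $j_0$ sum to a bound independent of $y$. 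Two points would need tightening in a fully rigorous write-up. First, $K=\mascF^{-1}a$ is a priori only a tempered distribution, so the identity $K=\sum_j K_j$ and the H\"ormander kernel condition should be established for partial sums (or truncated multipliers) with bounds uniform in the truncation, after which one passes to the limit in the weak-type estimate; your uniform estimates make this routine but it should be said. Second, the duality step for $2<p<\infty$ uses that the adjoint is the multiplier $\overline{a}$, which satisfies the same hypotheses -- trivial, but part of the argument. Neither is a genuine gap; your proposal is a sound self-contained proof of a statement the paper deliberately leaves to the literature.
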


\par

\subsection{Fourier integral operators of \texorpdfstring{$SG$}{SG} type} 
We recall that the so-called $SG$-symbol class $S^{m,\mu}(\rr{2d})$, $m,\mu\in\rr{}$,
consists of all $a\in C^\infty (\rr {2d})$ such that
$$
\sum _{|\alpha |,|\beta |\leqslant N}\sup _{x,\xi \in \rr d}
\left (
\eabs x ^{-m+|\alpha|} \eabs \xi ^{-\mu + |\beta |}|D_x ^\alpha D_\xi^\beta a(x,\xi )|
\right )
$$
is finite for every integer $N\geqslant 0$. Following \cite{CR2}, we say that
$\varphi\in C^\infty(\rr d\times(\rr d\setminus 0))$ is a phase-function if it is real-valued, positively $1$-homogeneous with respect to $\xi$, that is,
$\varphi(x,\tau\xi)=\tau\varphi(x,\xi)$ for all $\tau>0$, $x,\xi\in\rr d$, $\xi\not=0$, and satisfies, for all $x,\xi\in\rr d$, $\xi\not=0$,
\begin{equation}
    \label{eq:phf}
	\begin{alignedat}{2}
    |\det \varphi ''_{x,\xi}(x,\xi)|
    &\geqslant C>0,
    % |\det\partial_x\partial_\xi\varphi(x,\xi)|
    % &\geqslant C>0,
    &\qquad
    \partial^\alpha_x\varphi(x,\xi)
    &\lesssim
    \eabs{x}^{1-|\alpha|} |\xi|,
    \quad
    \alpha\in \nn d,
    \\[1ex] 
    \eabs{\varphi^\prime_\xi(x,\xi)}
    &\asymp
    \eabs{x},&
    \qquad
    \eabs{\varphi^\prime_x(x,\xi)}
    &\asymp
    \eabs{\xi}.
	\end{alignedat}
\end{equation}
In the sequel, we will denote the set of all such phase-functions by $\phf$.

\par

For any $a\in S^{m,\mu}(\rr{2d})$ and $\varphi\in\phf$,
the Fourier integral operator $\Op_{\varphi}(a)$ is
the linear and continuous operator from $\mascS (\rr d)$
to $\mascS '(\rr d)$, given by
\begin{equation}\label{eq:fio2}
        (\Op_\varphi(a)f)(x)=\int_{\rr d} 
        e^{i\varphi(x,\xi)} a(x,\xi) \widehat{f}(\xi)\, d\xi,
        \quad f\in\mathscr{S}(\rr d).
\end{equation}
We recall the following (global on $\rr d$) $L^p$-boundedness result, 
proved in \cite{CR2}.   

\begin{thm}\label{Thm:LpSGFIOcont}
Let $p\in (1,\infty)$ and $m_0,\mu _0
\in \mathbf R$ be such that
    \begin{equation}\label{eq:sharp-thresholds}
     m_0,\mu_0
     \leqslant -(d-1)\left|\frac{1}{p}-\frac12\right|.
    \end{equation}
%
% %
%     \begin{equation}\label{eq:sharp-thresholds}
%      m\le -(d-1)\left|\frac{1}{p}-\frac12\right| 
%         \textrm{ and }
%          \mu\le -(d-1)\left|\frac{1}{p}-\frac12\right|.
%     \end{equation}
% %
Suppose that $\varphi\in\phf$ and $a\in S^{m_0,\mu_0}(\rr {2d})$
is such that $|\xi|\geqslant\varepsilon$, for some $\varepsilon>0$, 
on the support of $a$. Then  $\Op_{\varphi}(a)$ 
from  $\mascS (\rr d)$ to $\mascS '(\rr d)$ extends uniquely
to a continuous operator from $L^p(\rr d)$ to itself.
\end{thm}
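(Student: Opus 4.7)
The approach would follow the Seeger--Sogge--Stein strategy adapted to the $SG$ calculus, reducing the problem to an endpoint $H^1\to L^1$ bound together with an $L^2$ bound, and then interpolating and dualizing.

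First, I would reduce to the case $p=2$. Since $a$ is supported in $|\xi|\geqslant \varepsilon$, composition with the elliptic $SG$-pseudodifferential parametrix of the symbol $\eabs{x}^{m}\eabs{\xi}^{\mu}$ reduces matters to showing $L^2$-boundedness of $\Op_\varphi(b)$ for $b\in S^{0,0}(\rr{2d})$ with the same frequency support. For this one applies the $TT^*$ method: the composition $\Op_\varphi(b)\Op_\varphi(b)^*$ has an oscillatory kernel with phase $\varphi(x,\xi)-\varphi(y,\xi)$, and the non-degeneracy $|\det \partial_x\partial_\xi\varphi|\geqslant C$ together with the $SG$-estimates \eqref{eq:phf} on $\varphi$ allow one to apply stationary phase in $\xi$ and identify $\Op_\varphi(b)\Op_\varphi(b)^*$ with an $SG$-pseudodifferential operator of order $(0,0)$, hence bounded on $L^2$.

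Next, for $p\in(1,2)$ I would carry out a Littlewood--Paley decomposition $a=\sum_{j\geqslant 0} a_j$ with $a_j$ supported in $|\xi|\sim 2^j$, followed by a second angular decomposition of each $a_j$ into $\sim 2^{j(d-1)/2}$ pieces $a_{j,\nu}$ supported in conic caps of angular aperture $\sim 2^{-j/2}$. The homogeneity of $\varphi$ in $\xi$ together with the non-degenerate mixed Hessian forces the kernel of $T_{j,\nu}=\Op_\varphi(a_{j,\nu})$ to concentrate on a ``plate'' of tangential width $\sim 2^{-j/2}$ and normal width $\sim 2^{-j}$, modulated by the $SG$-spatial weight $\eabs{x}^m$. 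Testing $\Op_\varphi(a)$ on $H^1$-atoms and summing over $\nu$ and $j$ produces the $H^1\to L^1$ bound provided $\mu\leqslant -(d-1)/2$ (to sum the frequency decomposition) and $m\leqslant -(d-1)/2$ (to sum over the spatial plates, whose geometry is dilated by the $\eabs{x}$-scale characteristic of the $SG$ calculus).

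Real interpolation between $H^1\to L^1$ and $L^2\to L^2$ then yields the result for $p\in(1,2)$, and the case $p\in(2,\infty)$ follows by duality, since $\Op_\varphi(a)^*$ is again an $SG$-Fourier integral operator with amplitude in $S^{m,\mu}(\rr{2d})$, up to a calculus reduction analogous in spirit to Proposition~\ref{Prop:CalculiTransfer}. The main obstacle is the plate-kernel estimate: adapting the classical Seeger--Sogge--Stein geometry to the $SG$ setting requires controlling the plate structure \emph{uniformly in $\eabs{x}$}, and this is precisely where the symmetric spatial threshold $m\leqslant -(d-1)|1/p-1/2|$ becomes indispensable, matching the familiar frequency threshold. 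The support hypothesis $|\xi|\geqslant \varepsilon$ is convenient since it eliminates the low-frequency zone where $\varphi$ would fail to be smooth and homogeneous, ensuring that the entire analysis can be carried out for a genuinely homogeneous phase.
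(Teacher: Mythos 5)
You should first note that the paper does not prove this statement at all: Theorem \ref{Thm:LpSGFIOcont} is recalled from the cited work of Coriasco and Ruzhansky \cite{CR2}, so there is no internal proof to compare with. Your outline does reproduce the broad strategy of that literature (an $L^2$ bound via the $SG$ calculus and $TT^*$, a Seeger--Sogge--Stein frequency-plus-angular decomposition for an endpoint estimate, then interpolation and duality), so the overall architecture is the right one.

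However, as a proof of the stated theorem your argument has a genuine gap at the interpolation step. The thresholds in \eqref{eq:sharp-thresholds} are $m,\mu\le -(d-1)\left|\tfrac1p-\tfrac12\right|$, which for $p\in(1,2)$ is strictly weaker than $-(d-1)/2$. An amplitude of such order does \emph{not} satisfy the hypotheses of your $H^1\to L^1$ estimate, so the fixed operator $\Op_\varphi(a)$ has only the $L^2$ endpoint available, and real interpolation between $H^1\to L^1$ and $L^2\to L^2$ applied to this single operator cannot produce the claimed $L^p$ bound; what your argument as written proves is the weaker statement with $m,\mu\le -(d-1)/2$ for all $p\in(1,\infty)$. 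To reach the $p$-dependent threshold one must embed $\Op_\varphi(a)$ into an analytic family, e.g.\ by twisting the amplitude with factors of the form $\eabs x^{z_1}\eabs \xi ^{z_2}$ so that on one line of the strip the family is $L^2$-bounded and on the other it maps $H^1\to L^1$, and then apply complex (Stein) interpolation in both the $x$- and $\xi$-orders simultaneously; this is precisely the mechanism in \cite{SSS91} and its $SG$-globalization in \cite{CR2}. Two smaller points: the duality step for $p\in(2,\infty)$ requires treating the adjoint as a so-called type II $SG$ Fourier integral operator with its own calculus and boundedness theory (Proposition \ref{Prop:CalculiTransfer} concerns only pseudo-differential quantizations and does not cover this), and the plate/kernel estimates uniform in $\eabs x$, which you flag as ``the main obstacle,'' are indeed the technical core of the global result and cannot be dismissed as routine; they are where the spatial order $m$ genuinely enters.
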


\par

\begin{rem}
If $\varphi(x,\xi)=\scal x\xi$
in Theorem \ref{Thm:LpSGFIOcont},
then $\op _\fy (a)$ becomes a
pseudo-differential operator with symbol %%%%SC 20/10/23 %%%%%
$a$, which we still denote by $\Op(a)$. %%%%%%%%%%%%%%%%
In this case, Proposition
\ref{Prop:LpCont} is a strict improvement
of Theorem \ref{Thm:LpSGFIOcont}.
For $p\neq 2$ and general $\varphi \in \phf$, 
uniform boundedness of the amplitude $a$
is not enough to guarantee that 
$\Op_\varphi(a)$ maps $L^p$ continuously
into itself, even if the support of
$f$ is compact.
(See \cite{SSS91}.)
In \eqref{eq:sharp-thresholds}, the condition on the $x$-order $m_0$ can be viewed as a 
\emph{loss of decay}. Similarly, the condition on the $\xi$-order $\mu_0$ can be considered a \emph{loss of 
smoothness}. Notice also that no
condition of compactness of the support of $f$ is 
needed in Theorem \ref{Thm:LpSGFIOcont} (see 
\cite{CR2} and the references therein for 
more details). Furthermore, we remark that in \cite{DSFS} 
many local and global $L^p$-boundedness results for 
Fourier integral operators have been proved under hypotheses 
different from those assumed in Theorem \ref{Thm:LpSGFIOcont}.
\end{rem}

% \begin{rem}
% As it is well-known, in view of the presence of a 
% phase function $\varphi\in\phf$, 
% assumed different from
% $\varphi(x,\xi)=\scal x\xi$ (for which 
% \eqref{eq:fio2} actually becomes a pseudo-
% differential operator), 
% the uniform boundedness of the amplitude $a$
% is, in general, not enough to guarantee that 
% $\Op_\varphi(a)$ continuously maps $L^p$ into 
% itself, even if the support of
% $f$ is compact (see the celebrated paper 
% \cite{SSS91}), except when $p=2$. This is, of 
% course,
% in strong contrast with Proposition 
% \ref{Prop:LpCont}.
% Notice, in \eqref{eq:sharp-thresholds}, the 
% \emph{loss of decay} (that is,
% the condition on the $x$-order $m$ of the 
% amplitude), 
% together with the well-known \textit{loss of 
% smoothness} (that is, 
% the condition on the $\xi$-order $\mu$ of the 
% amplitude). Notice also that no
% condition of compactness of the support of $f$ is 
% needed in Theorem \ref{Thm:LpSGFIOcont} (see 
% \cite{CR2} and the references quoted therein for 
% more details).
% \end{rem}

%%%%%%%%%%%%%%%%%%%%%%%%%%%%%%%%%%%%%%%
\section{The role of upper and lower
Lebesgue exponents
for Young functions}\label{sec2}
%%%%%%%%%%%%%%%%%%%%%%%%%%%%%%%%%%%%%%%

\par

%\textbf{In this section \dots}

In this section we investigate
the Orlicz Lebesgue exponents $p_\Phi$
and $q_\Phi$ and link conditions on
these exponents to various properties
of their Young functions $\Phi$. Especially
we show that both implications in
\eqref{eq:qPhi} involving $q_\Phi$
are wrong (see Proposition
\ref{Prop:YounFuncEquivCond2}).
Instead we
deduce other conditions on $\Phi$ which
characterize $q_\Phi >1$ (see Propositions
\ref{Prop:YounFuncEquivCond} and
\ref{Prop:qPhiEquivCond}). We also
remark that our investigations are
related to the achievements in
\cite{KovSkr}.

\par

In the following proposition we list some basic
properties of relations between Young functions
and their upper and lower Lebesgue exponents.

\par

\begin{prop}\label{Prop:YounFuncEquivCond}
Let $\Phi$ be a Young function with $\Omega$ as in \eqref{eq:OmegaDef},
% which is non-zero outside the origin, 
and let
$p_{\Phi}$ and $q_{\Phi}$ be as in
\eqref{Eq:StrictYoungFunc2} and
\eqref{Eq:StrictYoungFunc1}. Then the following is 
true:
\begin{enumerate}
\item $1\leqslant q_\Phi \leqslant p_\Phi$;

\vrum

\item $p_\Phi =1$, if and only if $\Phi$ is a linear map;

\vrum

\item $p_{\Phi} < \infty$, if and only if $\Phi$ 
fulfills
the $\Delta_2$-condition;

\vrum

\item $q_{\Phi} > 1$, if and only if there is a
$p>1$ such that $\frac{\Phi(t)}{t^p}$ increases.
\end{enumerate}
\end{prop}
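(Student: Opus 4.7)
The plan is to prove the four items largely independently, invoking convexity of $\Phi$ (and therefore monotonicity of the one-sided derivatives $\Phi'_\pm$) together with the elementary fact, already exploited in the discussion around \eqref{Eq:Squeezing1}--\eqref{Eq:Squeezing2}, that the sign of the derivative of $\Phi(t)/t^r$ is governed by the sign of $t\Phi'_\pm(t)-r\,\Phi(t)$.

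For (1), I would first observe that $q_\Phi\leqslant p_\Phi$ is immediate from the definitions as an inf and a sup of the same quantity. For $q_\Phi\geqslant 1$, the argument is to note that convexity together with $\Phi(0)=0$ makes $t\mapsto \Phi(t)/t$ non-decreasing on $(0,\infty)$; computing $(\Phi(t)/t)'_+\geqslant 0$ rearranges to $t\Phi'_+(t)\geqslant \Phi(t)$, which is the desired bound on the infimum. Then (2) follows at once: if $p_\Phi=1$ then by (1) one has $t\Phi'_+(t)=\Phi(t)$ identically, so $(\Phi(t)/t)'_+\equiv 0$, whence $\Phi(t)/t$ is a positive constant and $\Phi$ is linear; the converse is a trivial computation.

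For (3), in the direction $p_\Phi<\infty \Rightarrow \Delta_2$, I would reuse the representation written out in the paragraph after \eqref{Eq:Squeezing2}: since $(\Phi(t)/t^{p_\Phi})'_+\leqslant 0$, one can write $\Phi(t)=t^{p_\Phi}h(t)$ with $h$ non-increasing, and then $\Phi(2t)=(2t)^{p_\Phi}h(2t)\leqslant 2^{p_\Phi}\Phi(t)$. For the converse $\Delta_2 \Rightarrow p_\Phi<\infty$, the key is to bound $t\Phi'_+(t)$ from above: since $\Phi'_+$ is non-decreasing by convexity,
\begin{equation*}
t\,\Phi'_+(t)\;\leqslant\;\int_t^{2t}\Phi'_+(s)\,ds\;=\;\Phi(2t)-\Phi(t)\;\leqslant\;(C-1)\Phi(t),
\end{equation*}
which yields $p_\Phi\leqslant C-1<\infty$. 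This convexity-plus-$\Delta_2$ estimate is the step I expect to be the technical heart of the proposition, since it is the only place where one must pass from the discrete scaling inequality on $\Phi$ itself to a pointwise derivative bound.

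For (4), I would proceed exactly as in the discussion preceding \eqref{Eq:Squeezing1}: the identity
\begin{equation*}
\left(\frac{\Phi(t)}{t^p}\right)'_+\;=\;\frac{t\Phi'_+(t)-p\,\Phi(t)}{t^{p+1}}
\end{equation*}
shows that $\Phi(t)/t^p$ is non-decreasing on $(0,\infty)$ if and only if $t\Phi'_+(t)\geqslant p\,\Phi(t)$ for every $t>0$, i.e.\ if and only if $q_\Phi\geqslant p$. Hence $q_\Phi>1$ is equivalent to the existence of some $p>1$ (any $p\in(1,q_\Phi]$) for which $\Phi(t)/t^p$ increases, which is the stated equivalence. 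I would also briefly remark that throughout the argument one may work with either $\Phi'_+$ or $\Phi'_-$, since the two definitions in \eqref{Eq:StrictYoungFunc2}--\eqref{Eq:StrictYoungFunc1} agree and the two one-sided derivatives differ only on a countable set.
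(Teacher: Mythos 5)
Your proposal is correct and follows essentially the same route as the paper: item (1) via convexity of $\Phi$ with $\Phi(0)=0$, item (3) via the representation $\Phi(t)=t^{p_\Phi}h(t)$ with $h$ non-increasing and the bound $t\Phi'_+(t)\leqslant\Phi(2t)-\Phi(t)\lesssim\Phi(t)$ under $\Delta_2$, and item (4) by relating the sign of $\bigl(\Phi(t)/t^p\bigr)'_+$ to $t\Phi'_+(t)-p\Phi(t)$. Your use of the integral representation $\Phi(2t)-\Phi(t)=\int_t^{2t}\Phi'_+(s)\,ds$ in place of the paper's mean-value argument is a minor, equally valid variation.
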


\par

\begin{rem}\label{Rem:YounFuncEquivCond}
Taking into account that $\Phi$ in Proposition
\ref{Prop:YounFuncEquivCond} is a Young function,
we find that (4) is equivalent to 
\begin{enumerate}
\item[{\rm{(4)$'$}}]
\emph{$q_{\Phi} > 1$, if and only if there is a
$p>1$ such that $\frac{\Phi(t)}{t^p}$ increases,
% $$
% \lim _{t\to 0+} \frac{\Phi(t)}{t^p} = 0
% \quad \text{and}\quad
% \lim _{t\to \infty} \frac{\Phi(t)}{t^p} = \infty .
% $$
$$
\lim _{t\to a^+} \frac{\Phi(t)}{t^p} = 0
\quad \text{and}\quad
\lim _{t\to \infty} \frac{\Phi(t)}{t^p} = \infty
$$
for some $a\geqslant 0$.
}
\end{enumerate}
\end{rem}

\par

Most of Proposition \ref{Prop:YounFuncEquivCond}
and Remark \ref{Rem:YounFuncEquivCond} are
well-known (see e.{\,}g. \cite{BlaUst,Mal1,Mal2,Mal3}).
In order to be self-contained, we here present a proof.

\par

\begin{proof}[Proof of Proposition
\ref{Prop:YounFuncEquivCond}]
%\textcolor{blue}{Use different proof of (1).}
% If 
% $\Omega = \emptyset$, (1) is trivially true. Assume 
% therefore that $\Omega \neq \emptyset$. Then $(a,b) 
% \subseteq \Omega$, where $0\leqslant a < b \leqslant 
% \infty$.
% Since $\Phi$ and its left and right derivatives
% are increasing, the mean-value
% theorem gives that for every $t\in\Omega$ there 
% exists some $c=c_t\in [0,1]$ such that 
If 
$\Omega = \emptyset$, (1) is trivially true. Assume 
therefore that $\Omega \neq \emptyset$, and let
$t\in \Omega$. By the fact that $\Phi$
is convex on $\Omega$, we obtain
$$
\frac {\Phi (t)}t =\frac {\Phi (t)-\Phi (0)}t
\leqslant \Phi '_-(t)\leqslant \Phi' _+(t).
$$
This gives (1).

\par

If $\Phi$ is linear, then $\frac {t\Phi '(t)}{\Phi (t)}=1$ for all $t\in\mathbf{R}_+$, giving that
$q_\Phi =p_\Phi =1$. Suppose instead that $p_\Phi =1$. Then $\Omega = \mathbf{R}_+$ and
$$
\frac {t\Phi_\pm '(t)}{\Phi (t)}=1,
$$ 
for all $t\in\mathbf{R}_+$ in view of (1) and its proof. This implies that $\Phi (t)=Ct$ for some
constant $C$, and (2) follows.

\par

In order to prove (3), we first suppose that $p_{\Phi} < \infty$. Then
$$
\frac{t \Phi _\pm'(t)}{\Phi(t)}\leqslant R
\quad \iff \quad
t \Phi _\pm'(t)-R\Phi(t) \leqslant 0,
$$
for some $R>0$. Since $\Phi (0)=0$, we obtain
$$
\Phi (t) = t^Rh(t),\quad t>0,
$$
for some positive decreasing function $h(t)$, $t>0$. This gives
$$
\Phi (2t) = (2t)^Rh(2t) \leqslant 2^R t^Rh(t)=2^R\Phi (t),
$$
and it follows that $\Phi$ satisfies the $\Delta _2$-condition
when $p_{\Phi} < \infty$.

\par

Suppose instead that $\Phi$ satisfies the $\Delta _2$-condition. Then $\Omega = \mathbf{R}_+$.
By the mean-value theorem and the fact that $\Phi '_\pm(t)$
is increasing we obtain
$$
\Phi '_\pm(t)t \leqslant \Phi (2t)-\Phi (t)\leqslant \Phi (2t)\leqslant C\Phi (t),
$$
for some constant $C>0$. Here the last inequality follows from
the fact that $\Phi$ satisfies the $\Delta _2$-condition. This gives
$$
\frac{t \Phi _\pm'(t)}{\Phi(t)} \leqslant C,
$$
giving that $p_\Phi \leqslant C<\infty$, and we have proved (3).

\par

Next we prove (4). Suppose that $q_\Phi>1$. Then
there is a $p>1$ such that
$$
\frac{t \Phi_\pm'(t)}{\Phi(t)} > p
$$
for all $t\in\Omega$, which gives
$$
t \Phi_\pm'(t) - p \Phi(t) > 0.
$$
Hence
$$
\frac{t^p \Phi_\pm'(t) - p t^{p-1}
\Phi(t)}{t^{2p}}> 0,
$$
or equivalently
$$
\left(\frac{\Phi(t)}{t^p} 
\right)_{\!\!\pm}' > 0.
$$
% Since $\Phi(t)$ is a Young function, we obtain
% $$\frac{\Phi(t)}{t^p} \nearrow \infty$$
% and
% $$\frac{\Phi(t)}{t^p} \searrow C,$$
% where $C\geqslant 0$. If necessary, by replacing $p$ with $p_0$, $1<p_0<p$, we obtain
% $$\frac{\Phi(t)}{t^{p_0}} \searrow 0.$$
Hence, the result now holds. If we instead suppose
that $\frac{\Phi(t)}{t^p}$ is increasing for some
$p>1$, then applying the arguments above in reverse
order now yields $q_\Phi\geqslant p > 1$.
\end{proof}

\par

We observe that, besides
Proposition 
\ref{Prop:YounFuncEquivCond} (3),
there are several
contributions on characterizations
of the $\Delta _2$-condition
(see e.{\,}g. 
\cite{BlaUst,Mal1,Mal2,Mal3,KovSkr}).

\par

For the equivalence in (4) of Proposition
\ref{Prop:YounFuncEquivCond} we note further.

\par

\begin{prop}\label{Prop:qPhiEquivCond}
%\textcolor{blue}{Fix this one also?} 
% Let $\Phi$ be a Young function which is non-zero
% outside the origin, and let $q_{\Phi}$ be as in
% \eqref{Eq:StrictYoungFunc1}.
Let $\Phi$ be a Young function with $\Omega$ as in \eqref{eq:OmegaDef},
and let $q_{\Phi}$ be as in
\eqref{Eq:StrictYoungFunc1}.
Then the following
conditions are equivalent:
\begin{enumerate}
\item $q_\Phi >1$;

\vrum

\item there is a $p>1$ such that
$\frac{\Phi(t)}{t^p}$ increases;

\vrum

\item there are $p,q>1$ such that
$\frac{\Phi(t)}{t^{p}}$ increases
near the origin and
$\frac{\Phi(t)}{t^{q}}$ increases
at infinity;

\vrum

\item $\Phi$ fulfills the $\Lambda$-condition.
\end{enumerate}
\end{prop}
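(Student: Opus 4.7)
The plan is to establish the cycle $(2)\Rightarrow(3)\Rightarrow(1)\Rightarrow(2)$ together with the side equivalence $(2)\Leftrightarrow(4)$; the link $(1)\Leftrightarrow(2)$ is already supplied by Proposition~\ref{Prop:YounFuncEquivCond}\,(4). First I would dispatch the easy pieces: $(2)\Rightarrow(3)$ by choosing $p=q$, and $(2)\Leftrightarrow(4)$ by a direct computation—if $\Phi(t)/t^{p}$ is non-decreasing and $0<s\leqslant t$ with $c=s/t\in(0,1]$, then $\Phi(ct)/(ct)^{p}\leqslant \Phi(t)/t^{p}$, i.e. $\Phi(ct)\leqslant c^{p}\Phi(t)$, and the reverse direction is analogous.

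The substantive implication is $(3)\Rightarrow(1)$. Assume $\Phi(t)/t^{p}$ is non-decreasing on $(0,r_1]$ and $\Phi(t)/t^{q}$ is non-decreasing on $[r_2,\infty)$ for some $p,q>1$ and $0<r_1\leqslant r_2<\infty$. The same derivative-of-the-ratio computation used in the proof of Proposition~\ref{Prop:YounFuncEquivCond}\,(4) converts these into pointwise lower bounds
\begin{equation*}
g(t):=\frac{t\,\Phi'_+(t)}{\Phi(t)}\geqslant p \ \text{ on }\ (0,r_1],
\qquad g(t)\geqslant q\ \text{ on }\ [r_2,\infty).
\end{equation*}
I would then observe that $g$ is lower semi-continuous on $(0,\infty)$: $\Phi'_+$ is non-decreasing and right-continuous, while $\Phi$ is continuous and strictly positive on $(0,\infty)$ (the latter by the hypothesis that $\Phi$ is non-zero outside the origin). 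Consequently $g$ attains its infimum on the compact middle interval $[r_1,r_2]$ at some point $t_*$.

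The hard part is showing $\alpha:=g(t_*)>1$. By Proposition~\ref{Prop:YounFuncEquivCond}\,(1) we already have $g\geqslant 1$, so I would argue by contradiction, assuming $g(t_*)=1$, i.e. $t_*\Phi'_+(t_*)=\Phi(t_*)$. Using the absolute continuity of $\Phi$ on $[0,t_*]$, this equality reads
\begin{equation*}
\int_0^{t_*}\bigl[\Phi'_+(t_*)-\Phi'_+(s)\bigr]\,ds=0,
\end{equation*}
whose non-negative integrand must vanish a.e., forcing $\Phi'_+$ to be constant on $[0,t_*]$ and $\Phi$ to be linear there—in particular on $(0,r_1]$. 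This is incompatible with $\Phi(t)/t^{p}$ being non-decreasing on $(0,r_1]$ for $p>1$, since for a non-zero linear $\Phi$ the ratio $\Phi(t)/t^p$ is strictly decreasing. Hence $\alpha>1$, and combining with the outer bounds gives $q_\Phi\geqslant \min(p,q,\alpha)>1$, which is $(1)$. The principal obstacle is precisely this rigidity on the middle interval: ruling out the boundary value $g=1$ through the equality case of $t\Phi'_+(t)\geqslant\Phi(t)$, with everything else reducing to bookkeeping and appeals to Proposition~\ref{Prop:YounFuncEquivCond}.
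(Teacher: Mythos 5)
Your decomposition of the easy parts ($(2)\Rightarrow(3)$, $(2)\Leftrightarrow(4)$, and $(1)\Leftrightarrow(2)$ via Proposition \ref{Prop:YounFuncEquivCond}) coincides with the paper's. For the substantive implication $(3)\Rightarrow(1)$ you take a genuinely different route: the paper approximates $\Phi$ on the middle interval $[R_1,R_2]$ by polygonal chains of chords $f_j(t)=a_jt-b_j$, uses convexity to trap the slopes and intercepts between those of the tangent lines at $R_1$ and $R_2$, and thereby gets the \emph{explicit} quantitative bound $\inf_{[R_1,R_2]}t\Phi_+'(t)/\Phi(t)\geqslant 1+m_1/\Phi(R_2)>1$ (with $m_1=R_1\Phi_+'(R_1)-\Phi(R_1)>0$), with no compactness or attainment argument needed. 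You instead argue by compactness and the rigidity of the equality case $t\Phi'(t)=\Phi(t)$, which forces $\Phi$ to be linear on $[0,t_*]$ and contradicts the hypothesis near the origin. Your rigidity step is correct and conceptually clean, but it is non-quantitative, whereas the paper's chord argument produces a concrete lower bound for $q_2$.

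There is, however, one flaw you must repair: the claim that $g(t)=t\Phi_+'(t)/\Phi(t)$ is lower semicontinuous is false in general. At a corner of $\Phi$ (a jump of $\Phi_+'$ at $t_0$) one has $\lim_{t\uparrow t_0}g(t)=t_0\Phi_-'(t_0)/\Phi(t_0)<g(t_0)$, so $g$ can jump \emph{up} from the left and its infimum on $[r_1,r_2]$ need not be attained; right-continuity of $\Phi_+'$ gives upper-type behaviour from the left, not the inequality you need. The fix is immediate: work with $h(t)=t\Phi_-'(t)/\Phi(t)$, which defines the same $q_\Phi$ by \eqref{Eq:StrictYoungFunc1}, satisfies the same lower bounds $h\geqslant p$ on $(0,r_1]$ and $h\geqslant q$ on $[r_2,\infty)$ by the same ratio-derivative computation, and \emph{is} lower semicontinuous (since $\Phi_-'$ is left-continuous and $\lim_{t\downarrow t_0}\Phi_-'(t)=\Phi_+'(t_0)\geqslant\Phi_-'(t_0)$). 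Then the infimum is attained at some $t_*$, and if $h(t_*)=1$ your integral identity becomes $\int_0^{t_*}\bigl[\Phi_-'(t_*)-\Phi_+'(s)\bigr]\,ds=0$ with nonnegative integrand (as $\Phi_+'(s)\leqslant\Phi_-'(t_*)$ for $s<t_*$), which again forces linearity on $[0,t_*]$ and yields the contradiction. With this one-line correction your argument is complete; also note (as the paper implicitly does) that both proofs tacitly assume $\Phi$ is finite on the relevant bounded intervals.
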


\par

\begin{proof}
The result is obviously true when
$\Omega =\emptyset$. 
Therefore, suppose
that $\Omega \neq \emptyset$.
The equivalence of (1) and (2) was established in 
Proposition \ref{Prop:YounFuncEquivCond}. Trivially, 
(2) implies (3). Moreover, $\frac{\Phi(t)}{t^p}$ 
increases if and only if for any $t>0$ and any $c\in 
(0,1]$,
$$
\frac{\Phi(ct)}{(ct)^p} \leqslant \frac{\Phi(t)}{t^p}
$$
which is equivalent to (4), hence (2) is 
equivalent 
to (4).
We now show that (3)
implies (1), 
yielding the result.

\par

\begin{comment}
Suppose that (3) holds. Then, there are $R_1,R_2>0$ 
such that $\frac {\Phi(t)}{t^p}$
is increasing in
$\Omega _1\equiv (0,R_1)\cap \Omega$,
and that $\frac {\Phi(t)}{t^p}$ is increasing
in
$\Omega _2\equiv  (R_2,\infty)\cap \Omega$.
By test of differentiation, we obtain
$$
q_1 = \inf_{t\in \Omega _1}
\left (
\frac{t \Phi_\pm'(t)}{\Phi(t)}
\right )
\geqslant p>1\quad\text{and}\quad q_2 =
\inf_{t\in \Omega _2}
\left (
\frac{t \Phi_\pm'(t)}{\Phi(t)}
\right )
\geqslant q>1.
$$
Let
$$
q_{1,2} = \inf _{t\in \Omega _{1,2}}
\left (
\frac{t \Phi_\pm'(t)}{\Phi(t)}
\right ),
\quad \text{where}\quad
\Omega _{1,2}=[R_1,R_2]\cap \Omega .
$$   
\end{comment}

\par

Suppose that (3) holds. First
we also suppose that
$|\Omega|=\infty$.
Then, there are $R_1,R_2>0$ 
such that $\frac {\Phi(t)}{t^p}$
is increasing in
$\Omega _1\equiv (0,R_1)\cap \Omega$,
and that $\frac {\Phi(t)}{t^p}$ is increasing
in
$\Omega _2\equiv  (R_2,\infty)\cap \Omega$.
By test of differentiation, we obtain
\begin{equation}\label{Eq:q1q2Def}
q_1 = \inf_{t\in \Omega _1}
\left (
\frac{t \Phi_\pm'(t)}{\Phi(t)}
\right )
\geqslant p>1\quad\text{and}\quad q_2 =
\inf_{t\in \Omega _2}
\left (
\frac{t \Phi_\pm'(t)}{\Phi(t)}
\right )
\geqslant q>1.
\end{equation}
Let
\begin{equation}\label{Eq:q12Def}
q_{1,2} = \inf _{t\in \Omega _{1,2}}
\left (
\frac{t \Phi_\pm'(t)}{\Phi(t)}
\right ),
\quad \text{where}\quad
\Omega _{1,2}=[R_1,R_2]
\cap \Omega .
\end{equation}

\par

%We want to show that $q_2>1$, 
%which will in 
We intend to show that $q_2>1$, 
which in turn 
yield
$q_\Phi
= \min \{ q_1,q_{1,2},q_2\} > 1$, 
completing the proof in this case. Here we put $\inf \emptyset =\infty$. 

\par

Let $\varphi_1(t) = k_1 t - m_1$ and
$\varphi_2(t) = k_2 t - m_2$, with
$k_j = \Phi_+'(R_j)$ and $m_j$ chosen
so that $\varphi_j(R_j) = \Phi(R_j)$,
$j=1,2$. Given that $\Phi$ is a Young
function, is convex, and fulfills (3),
it is clear that $k_1 \leqslant k_2$,
$m_1 \leqslant m_2$,
and $m_j > 0$ for $j=1,2$.

\par

We now approximate $\Phi(t)$ with linear segments forming 
polygonal chains for $R_1 \leqslant t \leqslant R_2$. Pick 
points $R_1 = t_0 < t_1 < \dots < t_n = R_2$ and define 
functions $f_j(t) =a_j t - b_j $ such that $f_j(t_j) = 
\Phi(t_j)$ and $f_j(t_{j+1})=\Phi(t_{j+1})$. Let $\Phi_n(t)$ 
be the polygonal chain on $[R_1,R_2]$ formed by connecting 
the functions $f_j$, meaning $\Phi_n(t) = f_j(t)$ whenever 
$t\in [t_j,t_{j+1}]$.

\par

Since $\Phi$ is convex and increasing, we have $k_1 
\leqslant a_j \leqslant k_2$ and $m_1 \leqslant b_j 
\leqslant m_2$ for all $j=1,\dots, n$.
Hence, for any $j = 1, \dots, n$, 
$$
\inf_{t\in[t_j,t_{j+1}]} \left( 
\frac{t (f_j)_\pm'(t)}{f_j(t)} \right)
= 
\inf_{t\in[t_j,t_{j+1}]}
\left(
1 + \frac{b_j}{a_j t - b_j} 
\right)
\geqslant 1 + \frac{m_1}{\Phi(R_2)},
$$
where the last inequality follows from the fact that $b_j \geqslant 
m_1$ and $a_j t_j - b_j = f_j(t_j) \leqslant f_n(t_n) = 
\Phi(R_2)$. From this, it is clear that
$$
q_{\Phi_n} = \inf_{t\in[R_1,R_2]}
\left (
\frac{t (\Phi_n)_\pm'(t)}{\Phi_n(t)} 
\right )
\geqslant 1 + \frac{m_1}{\Phi(R_2)}
$$
independent of the choice of $n$ and the points $t_j$,
$j=1,\dots n-1$, and therefore
$$
q_2 = \lim_{n\rightarrow \infty} q_{\Phi_n} \geqslant 1 + \frac{m_1}{\Phi(R_2)} > 1.
$$
This gives (1), completing the proof when $|\Omega |=\infty$.

\par

Next we consider the case
when $|\Omega |<\infty$.
Then $a=\sup \Omega$
is finite because $\Phi$ is increasing.
First suppose that
$$
\lim _{t\to a^-}\Phi (t) <\infty .
$$
Letting $R_2=a$, $q_1$, and
$q_{1,2}$ be as in \eqref{Eq:q1q2Def}
and \eqref{Eq:q12Def},
the same  arguments as in the previous
case show that $q_{1,2}>1$. This
gives
$q_\Phi = \min \{ q_1,q_{1,2}\} >1$,
and (1) follows, giving the
result in this case as well.

\par

It remains to prove the result
when $|\Omega |<\infty$ and
$$
\lim _{t\to a^-}\Phi (t) =\infty .
$$
We may assume that $R_1<a$, and
we let $a_\ep =a-\ep$ when
$\ep \in (0,a-R_1)$. By convexity
we have
$$
\frac {\Phi (a_\ep )-\Phi (R_1)}
{a_\ep -R_1}
\leqslant
\Phi '_-(a_\ep),
$$
which gives
$$
\frac {a_\ep \Phi '_-(a_\ep)}
{\Phi (a_\ep )}
\geqslant
\frac {a_\ep \left (
1-\frac {\Phi (R_1)}{\Phi (a_\ep )}
\right )}
{a_\ep -R_1}
\to \frac a{a-R_1}>1,
$$
as $\ep \to 0+$. Hence, if
$c\in (1,\frac a{a-R_1} )$, then
there is a $\delta \in (0,a-R_1)$
such that
\begin{equation}\label{Eq:Newq2Est}
\frac {t\Phi _-'(t)}{\Phi (t)}>c>1,
\quad \text{when}\quad
t\in (a-\delta ,a).
\end{equation}

\par

Let $R_2=a-\delta$, $\Omega _2$
be redefined as
$\Omega _2=(a-\delta ,a)$,
and let $q_1$, $q_2$, and
$q_{1,2}$ be as in
\eqref{Eq:q1q2Def} and
\eqref{Eq:q12Def}. Then by
the same arguments as in the
first case in combination
with \eqref{Eq:Newq2Est},
we obtain
$$
q_1
\geqslant
p>1,
\quad
q_{1,2}>1,
\quad \text{and}\quad
q_2>1.
$$
This gives
$q_\Phi =\min
\{ q_1,q_{1,2},q_2 \}>1$,
leading to (1) in this case as well,
and the result follows.
\end{proof}

\par

\begin{rem}
Due to Proposition \ref{Prop:qPhiEquivCond}
and its proof it is evident that $p$ in 
Proposition \ref{Prop:qPhiEquivCond} (4) is
strongly linked to the lower
Matuszewska-Orlicz index, given in
e.{\,}g. p. 117 in \cite{KaMaPe}. It
follows that parts of
Propositions \ref{Prop:YounFuncEquivCond}
and \ref{Prop:qPhiEquivCond} follow from
some of the established
properties on p. 118--121 in \cite{KaMaPe},
at least after suitable computations.
\end{rem}

\par

The following proposition shows that the condition
$q_\Phi  >1$ cannot be linked to strict convexity for
the Young function $\Phi$.

\par

\begin{prop}\label{Prop:YounFuncEquivCond2}
Let $\Phi$ be a Young function,
which is non-zero
outside the origin, and let
% and let
$q_{\Phi}$ be as in
\eqref{Eq:StrictYoungFunc1}.
Then the following is true:
\begin{enumerate}
\item if $q_{\Phi} > 1$, then there is an
equivalent Young function to $\Phi$
which is strictly  convex;

\vrum

\item $\Phi$ can be chosen such that
$q_\Phi >1$ but $\Phi$ is not strictly convex;

\vrum

\item $\Phi$ can be chosen such that
$q_\Phi =1$ but $\Phi$ is strictly convex.

% \vrum

% \item if $\Phi$ and $\Psi$ are equivalent, then
% $q_{\Phi}>1$ if and only if $q_\Psi >1$.
\end{enumerate}
\end{prop}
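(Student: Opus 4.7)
My plan is to prove the three assertions separately, treating (1) as a construction and (2), (3) as explicit examples.

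For (1), I would define
\[
\Psi(t) = \Phi(t) + \int_0^t \frac{\Phi(s)}{s}\,ds.
\]
Since $\Phi$ is convex with $\Phi(0)=0$, the chord slope $\Phi(s)/s$ is non-decreasing in $s$, so the integral is finite and bounded above by $t\cdot\Phi(t)/t=\Phi(t)$. This gives the two-sided bound $\Phi\leqslant\Psi\leqslant 2\Phi$, hence equivalence. Clearly $\Psi$ is convex, vanishes at $0$, and tends to $\infty$, so it is a Young function. The key point is strict convexity: $\Psi'_+(t)=\Phi'_+(t)+\Phi(t)/t$, where $\Phi'_+$ is non-decreasing and $\Phi(t)/t$ is \emph{strictly} increasing under the hypothesis $q_\Phi>1$. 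To justify the last statement, I would invoke Proposition \ref{Prop:qPhiEquivCond}(4) to obtain $p>1$ with $\Phi(cs)\leqslant c^p\Phi(s)$ for all $s>0$ and $c\in(0,1]$. Taking $c=s_1/s_2$ for $0<s_1<s_2$ then yields
\[
\frac{\Phi(s_1)}{s_1}\leqslant\left(\frac{s_1}{s_2}\right)^{p-1}\frac{\Phi(s_2)}{s_2}<\frac{\Phi(s_2)}{s_2}.
\]
Thus $\Psi'_+$ is strictly increasing, which in turn gives strict convexity of $\Psi$.

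For (2), I would exhibit the piecewise-defined function
\[
\Phi(t)=\begin{cases} t^2, & 0\leqslant t\leqslant 1,\\ 3t-2, & 1\leqslant t\leqslant 2,\\ t^2, & t\geqslant 2.\end{cases}
\]
It is continuous, and convex because the right slopes at the breakpoints jump $2\to 3$ and $3\to 4$, hence $\Phi'_+$ is non-decreasing. It is manifestly not strictly convex, owing to the linear middle piece. Computing $t\Phi'_+(t)/\Phi(t)$ piecewise gives $2$ on $[0,1]$ and on $[2,\infty)$, while on $[1,2]$ it equals $3t/(3t-2)$, which decreases from $3$ at $t=1$ to $3/2$ at $t=2$. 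Therefore $q_\Phi=3/2>1$.

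For (3), I would take $\Phi(t)=t+t^2$. This is a Young function, strictly convex because $\Phi''\equiv 2>0$, but $t\Phi'(t)/\Phi(t)=(1+2t)/(1+t)$ is monotonically increasing from $1$ to $2$ on $(0,\infty)$, so $q_\Phi=1$.

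The main obstacle is the construction in (1), since $\Delta_2$ is not assumed and so naive additive perturbations $\Phi(t)+\varepsilon t^p$ need not be comparable to $\Phi$ at either end (they may fail near the origin if $\Phi$ is sub-$t^p$ there, and near infinity in the opposite case). The integral $\int_0^t\Phi(s)/s\,ds$ is the natural substitute precisely because the monotonicity of chord slopes makes it automatically comparable to $\Phi$, while its derivative $\Phi(t)/t$ becomes strictly increasing exactly under the hypothesis $q_\Phi>1$.
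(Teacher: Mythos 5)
Your proposal is correct, but it reaches the conclusion by a genuinely different route than the paper, most notably in part (1). The paper first argues that $q_\Phi>1$ forces $\Phi$ to be strictly convex on some interval $(0,\varepsilon)$ (otherwise $\Phi(t)=ct$ near the origin and $q_\Phi=1$), then forms the convolution $\Psi(t)=\int_0^t\Phi(t-s)e^{-s}\,ds$, shows $\Psi''(t)>0$ because the integral always picks up the region near $0$ where $\Phi''>0$, verifies $\Psi\leqslant\Phi$, and finally takes $\Phi_1=\Phi+\Psi$ as the equivalent strictly convex Young function. You instead perturb by the integrated chord slope, $\Psi(t)=\Phi(t)+\int_0^t\Phi(s)/s\,ds$, which is squeezed between $\Phi$ and $2\Phi$ by monotonicity of $s\mapsto\Phi(s)/s$, and you obtain strict convexity from the fact that this chord slope is \emph{strictly} increasing precisely under the condition of Proposition \ref{Prop:qPhiEquivCond}\,(4); together with the non-decreasing $\Phi_+'$, this makes $\Psi_+'$ strictly increasing. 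Your construction buys something the paper's argument glosses over: it uses only one-sided derivatives and the already established characterization of $q_\Phi>1$, whereas the paper manipulates $\Phi''$, which for a general Young function requires an implicit smoothing step (cf.\ Proposition \ref{Prop:SmoothEquivYoungFunc}); conversely, the paper's exponential convolution yields a more regular equivalent function. For (2) and (3) you give different explicit examples from the paper's (the paper splices $2t^2$, $4t-2$, $t^2+2$ and uses $t\ln(1+t)$, you splice $t^2$, $3t-2$, $t^2$ and use $t+t^2$), and your computations check out: in (2) the quotient $t\Phi_+'(t)/\Phi(t)$ equals $2$ on the quadratic pieces and has infimum $3/2$ on the linear piece, so $q_\Phi=3/2>1$, and in (3) the quotient $(1+2t)/(1+t)$ tends to $1$ as $t\to0^+$, so $q_\Phi=1$ while $\Phi''\equiv2>0$.
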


\par

% \begin{rem}
% By combining Proposition \ref{Prop:YounFuncEquivCond2}
% (4), with Proposition \ref{Prop:YounFuncEquivCond} (1),
% it follows that if $\Phi$ and $\Psi$ are equivalent Young
% functions, then
% $$
% q_\Phi = 1\quad \text{if and only if}\quad q_\Psi = 1.
% $$
% \end{rem}

\par

\begin{rem}
In \cite{Liu} it is stated that (1) in
Proposition \ref{Prop:YounFuncEquivCond2}
can be replaced by
\begin{enumerate}
\item[{\rm{(1)$'$}}]$ q_\Phi >1$, if and only if
$\Phi$ is strictly convex.
\end{enumerate}
This is equivalent to that the following conditions
should hold:
\begin{enumerate}
\item[{\rm{(2)$'$}}] if $q_\Phi >1$,
then $\Phi$ is strictly convex;

\vrum

\item[{\rm{(3)$'$}}] if $\Phi$ is
strictly convex, then $q_\Phi >1$.
\end{enumerate}
(See remark after (1.1) in \cite{Liu}.)
Evidently, the assertion in \cite{Liu} is 
(strictly) stronger than
Proposition \ref{Prop:YounFuncEquivCond2} (1).
On the other hand,
Proposition \ref{Prop:YounFuncEquivCond2} (2)
shows that (2)$'$ can not be true and
Proposition \ref{Prop:YounFuncEquivCond2} (3)
shows that (3)$'$ can not be true.
Consequently, both implications in (1)$'$
are false.
\end{rem}

\par

\begin{proof}[Proof of Proposition
\ref{Prop:YounFuncEquivCond2}]
We begin by proving (1). 
Therefore assume that
$q_\Phi>1$. By Proposition \ref{Prop:SmoothEquivYoungFunc}, we can assume that $\Phi$ is smooth on $\mathbf{R} _+$.
Suppose that $\Phi$ fails to be
strictly convex in the whole interval
$(0,\ep )$, for some
$\ep >0$. 
This implies that $\Phi (t)=ct$ when $t\in (0,\ep )$,
for some $c\geqslant 0$, which in turn gives
$q_\Phi=1$, violating the condition $q_\Phi>1$.
Hence $\Phi$ must be strictly convex in $(0,\ep )$,
for some choice of $\ep >0$. 
%and we have $\Phi^{\prime\prime}(t)>0$ for $t\in(0,\ep)$.

\par

Let
$$
\Psi (t) = \int _0^t\Phi (t-s)e^{-s}\, ds.
$$
Then
$$
\Psi ''(t) = \Phi '_+(0)e^{-t}+\int _0^t\Phi ''(t-s)e^{-s}
\, ds
\geqslant
\int _{t-\ep}^t\Phi ''(t-s)e^{-s}\, ds >0,
$$
since $\Phi ''(t-s)>0$ and is continuous when $s\in (t-\ep ,t)$.
This shows that $\Psi$ is a strictly convex Young
function.

\par

Since $\Phi$ is increasing we also have
$$
\Psi (t)\leqslant \Phi (t),
$$
because
$$
\Psi (t) = \int _0^t\Phi (t-s)e^{-s}\, ds
\leqslant
\Phi (t)\int _0^te^{-s}\, ds
\leqslant
\Phi (t)\int _0^\infty e^{-s}\, ds
=
\Phi (t).
$$
This implies that
$$
\Phi _1(t)\equiv \Phi (t)+\Psi (t)
$$
is a Young function equivalent to $\Phi (t)$. Since 
$\Psi$ is strictly convex, it follows that
$\Phi _1$ is strictly convex as well.
Consequently, $\Phi _1$ fulfills the
required conditions for the searched Young
function, and (1) follows.

\par
In order to prove (2), we choose
$$
\Phi (t)=
\begin{cases}
  2t^2,&\text{when}\ t \leqslant 1
  \\[1ex]
   4t-2 ,&\text{when}\ 1 \leqslant t \leqslant 2
    \\[1ex]
   t^2+2,&
   \text{when}\ 
   t \geqslant 2 
\end{cases}
$$
which is not strictly convex. Then
\begin{align*}
q_{\Phi}
&=
\inf _{t>0}
\left (
\frac{t \Phi'(t)}{\Phi(t)}
\right )
\\[1ex]
&=
\min \left \{
\inf_{t \leqslant 1} \left (\frac{4t^2}{2t^2}\right ),
\inf_{1 \leqslant t \leqslant 2} \left (\frac{4t}{4t-2}\right ),
\inf_{t \geqslant 2} \left (\frac{2t^2}{t^2+2}\right )  
\right \}
=
\frac{4}{3} >1,
\end{align*}
%%
%$$
%\Phi (t)=
%\begin{cases}
%  \frac{t^2}{2},&\text{when}\ t \leqslant 1
%  \\[1ex]
%   t-\frac{1}{2} ,&\text{when}\ 1 \leqslant t \leqslant 2
%    \\[1ex]
%   \frac{t^2}{4}+\frac{1}{2},&
%   \text{when}\ 
%   t \geqslant 2 
%\end{cases}
%$$
%which is not strictly convex. Then
%%%
%\begin{align*}
%q_{\Phi}
%&=
%\frac{t \Phi'(t)}{\Phi(t)}
%\\[1ex]
%&=
%\min \left \{
%\inf_{t \leqslant 1} \left (\frac{t^2}{t^2/2}\right ),
%\inf_{1 \leqslant t \leqslant 2} \left (\frac{t}{t-\frac{1}{2}}\right ),
%\inf_{t \geqslant 2} \left (\frac{t^2/2}{t^2/4+1/2}\right )  
%\right \}
%=\frac{4}{3} >1,
%\end{align*}
%%%
which shows that $\Phi$ satisfies all the searched properties.
This gives (2).

\par

Next we prove (3). Let
$$
\Phi (t) =
t\ln (1+t),\quad  t\geqslant 0.
$$
Then $\Phi$ is a Young function,
and it follows by straight-forward computations that
$q_{\Phi} = 1$.
%%%
%\begin{equation}\label{Eq:qXEqual1}
%q_{\Phi} = 1.
%\end{equation}
%%%
We also have $\Phi ''(t) >0$,
giving that $\Phi$ is strictly convex. 
Consequently, $\Phi$ satisfies all searched 
properties and (3) follows.
This gives the result.
\end{proof}

\par

The exponents $q_\Phi$ and $p_\Phi$
can also be related to corresponding
exponents for the conjugate Young
function as in the following proposition,
which slightly generalizes (2.11) in
\cite{KovSkr}.

\par

\par

\begin{thm}\label{thm:conjYoungImproved}
Suppose $\Phi$ is a Young function and $\Psi$ its corresponding
conjugate Young function.
Then
$$
\frac{1}{p_\Psi} + \frac{1}{q_\Phi} = 1.
$$
\end{thm}

\par

\begin{proof}
Suppose that for every $t=t_0\in(0,\infty)$ there is some $s_0 =s(t)$ such that 
$$
\Psi(t) = s t - \Phi(s).
$$
Observe that, necessarily, $\Psi(t)<\infty$.
Since for any $h\in \mathbf{R}$,

$$
\Psi(t_0+h) = \sup_{s>0} 
\left( s(t_0+h) - \Phi(s) \right) \geqslant s_0(t_0+h) - \Phi(s_0),
$$
we obtain
$$
\frac{\Psi(t_0 + h) - \Psi(t_0)}{h} \leqslant s_0
$$
when $h<0$ and
$$
\frac{\Psi(t_0 + h) - \Psi(t_0)}{h} \geqslant s_0
$$
when $h>0$.
By taking limits, we therefore have $\Psi_-'(t_0) \leqslant s_0 \leqslant \Psi_+'(t_0)$.
Hence
$$
\frac{t_0 \Psi_-'(t_0)}{\Psi(t_0)} 
\leqslant \frac{t_0 s_0}{t_0 s_0 - \Phi(s_0)}
\leqslant \frac{t_0 \Psi_+'(t_0)}{\Psi(t_0)}
$$
and therefore
$$
\frac{1}{
\left(\frac{t_0 \Psi_+'(t_0)}{\Psi(t_0)}
\right)} 
\leqslant 1 -\frac{\Phi(s_0)}{t_0 s_0}
\leqslant \frac{1}{
\left(\frac{t_0 \Psi_-'(t_0)}{\Psi(t_0)}
\right)}. 
$$
By rewriting $\Psi(t_0) = t_0 s_0 - \Phi(s_0)$ as $\Phi(s_0) = s_0 t_0 - \Psi(t_0)$ and performing the exact same arguments as above, we obtain (for the same $s_0,t_0$) that $\Phi_-'(s_0) \leqslant t_0 \leqslant \Phi_+'(s_0)$. Hence, for every $t_0$ and corresponding $s_0$, we arrive at the inequalities
\begin{equation}\label{eq:conjIneq1}    
\frac{1}{
\left(\frac{t_0 \Psi_+'(t_0)}{\Psi(t_0)}
\right)} 
\leqslant 1 -\frac{1}{\left(\frac{s_0\Phi_+'(s_0)}{\Phi(s_0)}\right)}
\end{equation}

and
\begin{equation}\label{eq:conjIneq2}
1 -\frac{1}{\left(\frac{s_0\Phi_-'(s_0)}{\Phi(s_0)}\right)}
\leqslant \frac{1}{
\left(\frac{t_0 \Psi_-'(t_0)}{\Psi(t_0)}
\right)}.
\end{equation}
By maximizing the right-hand side of \eqref{eq:conjIneq1} and then maximizing the left-hand side, it is clear that
$$
\frac{1}{q_\Psi} \leqslant 1 - \frac{1}{p_\Phi}.
$$
But the same procedure applied to \eqref{eq:conjIneq2} yields
$$
1 - \frac{1}{p_\Phi} \leqslant \frac{1}{q_\Psi}.
$$
This proves the desired equality in the case that, for every $t\in\mathbf{R}_+$, the supremum of $(s t - \Phi(s))$ is attained for some $s\in\mathbf{R}_+$.

\par

Now suppose instead that there is some $t=t_0\in\mathbf{R}_+$ such that
$$
\Psi(t_0) = \sup_{s>0} \left(s t_0 - \Phi(s)\right) = \lim_{s\to\infty} \left(s t_0 - \Phi(s)\right).
$$
Then
$$
\Psi(t_0+\varepsilon)\geqslant \lim_{s\to\infty} \left(s t_0 - \Phi(s) + \varepsilon s\right) = \infty,
$$ 
meaning $p_\Psi = \infty$.
Moreover,
$$
s t_0 - \Phi(s) \rightarrow \Psi(t_0), \quad s\to\infty
$$
implies that
$$
\frac{\Phi(s)}{s} \rightarrow t_0, \quad s\to\infty
$$
meaning
$$
\frac{\Phi(s)}{s^p} \rightarrow 0, \quad s\to\infty
$$
for all $p>1$. Hence
$q_\Phi = 1$ by Proposition \ref{Prop:YounFuncEquivCond}(4). This completes the proof.
\end{proof}

\par

\begin{cor}\label{Cor:EquivYoungFuncLebesgueExp}
Let $\Phi_1$ and $\Phi_2$ be equivalent Young functions. Then the following is true:
\begin{enumerate}
\item $p_{\Phi_1} < \infty$ if and only if $p_{\Phi_2}<\infty$;

\vrum

\item $q_{\Phi_1} > 1$ if and only if $q_{\Phi_2}>1$.
\end{enumerate}
\end{cor}

\par

\begin{proof}
If $p_{\Phi_1} < \infty$, then by Proposition \ref{Prop:YounFuncEquivCond} (3), $\Phi_1$ fulfills the $\Delta_2$-condition. 
Since $\Phi_2$ is equivalent to $\Phi_1$, $\Phi_2$ also fulfills the $\Delta_2$-condition. 
This proves (1).

\par

Suppose $q_{\Phi_1} > 1$. 
Then, by Theorem \ref{thm:conjYoungImproved}, 
$p_{\Psi_1} < \infty$, where 
$\Psi_1$ is the conjugate Young function to $\Phi_1$. Since $\Psi_1$ is equivalent to $\Psi_2$, 
where $\Psi_2$ is the conjugate Young function to $\Phi_2$, we get $p_{\Psi_2} < \infty$ by (1). 
Applying Theorem \ref{thm:conjYoungImproved} once more, we obtain $q_{\Psi_2} > 1$. This completes the proof.
\end{proof}

\par

% \begin{cor}
% Let $\Phi _1$ and $\Phi _2$ be
% equivalent
% Young functions such that
% $p_{\Phi _1} <\infty$. Then
% the following is true:
% \begin{enumerate}
% \item $p_{\Phi _2} <\infty$;

% \vrum

% \item $q_{\Phi _1} >1$, if and only if
% $q_{\Phi _2} >1$.
% \end{enumerate}
% \end{cor}

% \par

% \begin{proof}
% If $p_{\Phi_1} < \infty$, then by Proposition \ref{Prop:YounFuncEquivCond} (3), $\Phi_1$ fulfills the $\Delta_2$-condition. 
% Since $\Phi_2$ is equivalent to $\Phi_1$, $\Phi_2$ also fulfills the $\Delta_2$-condition. 
% This proves (1).

% \par

% Since $\Phi _1$ and
% $\Phi _2$ satisfy the $\Delta _2$-condition,
% the conjugate Young functions 
% $\Phi_1^*$ and $\Phi_2^*$ are also equivalent. 
% By Proposition \ref{prop:conjYoung}, $q_{\Phi_1} > 1$ implies $p_{\Phi_1^*} < \infty$. Hence, by (1), $p_{\Phi_2^*}<\infty$. By Proposition \ref{prop:conjYoung} once more, 
% we obtain $q_{\Phi_2} > 1$ and
% (2) follows, completing the proof.
% \end{proof}

\par

%%%%%%%%%%%%%%%%%%%%%%%%%%%%%%%%%
\section{Continuity for pseudo-differential
operators, Fourier multipliers, and Fourier 
integral operators on
Orlicz spaces and applications to PDEs}\label{sec3}
% SC 20/10/203
%%%%%%%%%%%%%%%%%%%%%%%%%%%%%%%%%

\par

%{\textbf{Describing text}}
In this section we extend
properties on $L^p$ continuity
for various types of
Fourier type operators into
continuity on Orlicz spaces.
Especially we perform such extensions
for H{\"o}rmander's improvement
of Mihlin's Fourier multiplier theorem
(see Theorem \ref{Thm:OrlContHormMult}).
We also deduce Orlicz space continuity
for suitable classes of pseudo-differential and Fourier integral
operators (see Theorems \ref{Thm:OrlContPseudo} and \ref{Thm:OrlContSGFIOs}).
Our investigations are
based on a special case of Marcinkiewicz
type interpolation theorem for Orlicz
spaces, deduced in \cite{Liu}.
Subsequently, we introduce a scale of Sobolev spaces modelled on Orlicz spaces,
and show boundedness results involving them, relying on our $L^\Phi$ continuity 
results and on lift properties. Finally, we illustrate a notion of (global) wave-front set
associated with Sobolev-Orlicz spaces, along the lines in \cite{CJT13b} (see also \cite{CJT13a}),
and show its propagation for suitable classes of PDEs.
%%%%%%%%%%%%%%%%%%%%

\par

%\begin{defn}\label{defqp}
%Given a Young function $\Phi$ it is possible to define the indices
%\begin{equation}
%q_{\Phi}=\inf _{t>0} \frac{t \varphi(t)}{\Phi(t)}, \quad p_{\Phi}=\sup _{t>0} \frac{t \varphi(t)}{\Phi(t)}.
%\end{equation} 
%where $\varphi$ is the right-continuous derivative of $\Phi$.
%\end{defn}

\par

%%% SC 20/10/23
\subsection{Continuity results} We now recall
the following Marcinkiewicz type interpolation
theorem on Orlicz spaces. We omit the proof since
the result is a special case of
\cite[Theorem 5.1]{Liu}.

\par

\begin{prop}\label{Prop:OrliczIntPol}
Let $\Phi$ be a strict Young function and
$p_0,p_1 \in (0,\infty ]$
be such that
$p_0<q_{\Phi} \leqslant p_{\Phi}<p_1$, where
$p_{\Phi}$ and $q_{\Phi}$ are defined in
\eqref{Eq:StrictYoungFunc2} and
\eqref{Eq:StrictYoungFunc1}.
Also let
\begin{equation}\label{Eq:OrliczIntPol}
T : L^{p_0}(\rr d) + L^{p_1}(\rr d)
\to
wL^{p_0}(\rr d) + wL^{p_1}(\rr d)
\end{equation}
be a linear and continuous map
which restricts to linear and continuous
mappings
\begin{alignat}{5}
T &:&\,  L^{p_0}(\rr d)
&\to
\, wL^{p_0}(\rr d) &
\qquad &\text{and} &\qquad
T &:& L^{p_1}(\rr d)
&\to
wL^{p_1}(\rr d).
\notag
% \end{alignat*}
% %%
\intertext{Then \eqref{Eq:OrliczIntPol} restricts
to linear and continuous mappings}
% %%
% \begin{alignat*}{2}
T &:& L^{\Phi}(\rr d)
&\to
L^{\Phi}(\rr d)&
\qquad &\text{and} &\qquad
T &:& \ wL^{\Phi}(\rr d)
&\to
wL^{\Phi}(\rr d).
\label{Eq:InterpolCont}
\end{alignat}
\end{prop}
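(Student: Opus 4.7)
The plan is to carry out the classical Marcinkiewicz real-interpolation argument directly at the level of the Luxemburg norm. First, using Proposition \ref{Prop:YounFuncEquivCond} together with the differentiation trick in its proof, I would choose auxiliary exponents $q,p$ with $p_0<q<q_\Phi\leqslant p_\Phi<p<p_1$, so that $t\mapsto \Phi(t)/t^q$ is non-decreasing and $t\mapsto \Phi(t)/t^p$ is non-increasing on $(0,\infty)$. Since $p_\Phi<p_1$ one moreover has $\Phi(t)\,t^{-p_1}\to 0$ as $t\to \infty$, a fact needed later for a boundary term.

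Second, for fixed $\lambda,s>0$ and $f\in L^\Phi(\rr d)$ I would decompose $f=g_s+h_s$ with $g_s=f\chi_{\{|f|>s\}}\in L^{p_0}(\rr d)$ and $h_s=f-g_s\in L^{p_1}(\rr d)$. Linearity of $T$ combined with the assumed weak-type bounds yields the distributional inequality
\begin{equation*}
\mu_{Tf}(2s)\leqslant \mu_{Tg_s}(s)+\mu_{Th_s}(s)\leqslant \frac{A_0^{p_0}}{s^{p_0}}\int_{\{|f|>s\}}|f|^{p_0}\,dx+\frac{A_1^{p_1}}{s^{p_1}}\int_{\{|f|\leqslant s\}}|f|^{p_1}\,dx,
\end{equation*}
with an obvious modification when $p_1=\infty$, in which case the $L^\infty$-bound on $Th_s$ forces the second term to vanish for a suitable choice of splitting level.

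Third, I would plug this into the layer-cake identity $\int \Phi(|Tf|/\lambda)\,dx=\int_0^\infty \Phi'_+(t)\,\mu_{Tf}(\lambda t)\,dt$, change variables $s=\lambda t/2$, and apply Fubini. After simplification, the problem reduces to bounding, pointwise in $x$, the two truncated integrals
\begin{equation*}
\int_0^{R}\Phi'_+(u)u^{-p_0}\,du\quad\text{and}\quad \int_{R}^\infty \Phi'_+(u)u^{-p_1}\,du,\qquad R=\frac{2|f(x)|}{\lambda}.
\end{equation*}
Integration by parts, combined with the monotonicity of $\Phi(u)/u^q$ (for the first) and $\Phi(u)/u^p$ together with the decay of $\Phi(u)u^{-p_1}$ at infinity (for the second), yields the bounds $\frac{q}{q-p_0}\Phi(R)R^{-p_0}$ and $\frac{p_1}{p_1-p}\Phi(R)R^{-p_1}$, after which the factors $|f(x)|^{p_0}$ and $|f(x)|^{p_1}$ and the corresponding powers of $R$ cancel. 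One arrives at
\begin{equation*}
\int \Phi(|Tf|/\lambda)\,dx\leqslant C\int \Phi(2|f|/\lambda)\,dx\leqslant C'\int \Phi(|f|/\lambda)\,dx,
\end{equation*}
the last step being the $\Delta_2$-condition.

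Finally, an additional application of $\Delta_2$ to absorb $C'$ into a dilation of $\lambda$, followed by taking infimum in the Luxemburg definition, gives $\|Tf\|_{L^\Phi}\leqslant C''\|f\|_{L^\Phi}$. The weak Orlicz estimate is obtained along the same lines, running the distributional inequality against $\sup_t\Phi(t/\lambda)\mu_{Tf}(t)$ instead of integrating in $t$, which is in fact a simpler computation. The main technical obstacle, in my view, is precisely the sharp pointwise control of the two truncated $u$-integrals by multiples of $\Phi(R)R^{-p_j}$: this is where strictness of $\Phi$ — both $p_\Phi<\infty$ and $q_\Phi>1$, i.e.\ the $\Delta_2$- and $\Lambda$-conditions — is crucial, and where the strict gaps $p_0<q_\Phi$ and $p_\Phi<p_1$ are needed to guarantee the convergence of the relevant primitives and the vanishing of the boundary term at infinity.
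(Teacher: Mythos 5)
Your argument is sound, but note that the paper does not actually prove this proposition: it is quoted as a special case of Theorem 5.1 in \cite{Liu}, so what you have written is a self-contained proof of the special case needed here rather than a variant of an argument appearing in the paper. Your route is the classical Marcinkiewicz--Zygmund scheme run on the Luxemburg modular: splitting $f=f\chi_{\{|f|>s\}}+f\chi_{\{|f|\leqslant s\}}$ at a level proportional to the target height, the layer-cake formula, Fubini, and the pointwise bounds $\int_0^R\Phi'_+(u)u^{-p_0}\,du\leqslant \tfrac{q}{q-p_0}\Phi(R)R^{-p_0}$ and $\int_R^\infty\Phi'_+(u)u^{-p_1}\,du\leqslant \tfrac{p_1}{p_1-p}\Phi(R)R^{-p_1}$; these do follow from the monotonicity of $\Phi(u)/u^{q}$ and $\Phi(u)/u^{p}$ for $p_0<q\leqslant q_\Phi$ and $p_\Phi\leqslant p<p_1$, which is exactly the differentiation trick the paper itself uses to establish \eqref{Eq:Squeezing1}--\eqref{Eq:Squeezing2}, and this is indeed where the strict gaps $p_0<q_\Phi$ and $p_\Phi<p_1$ and the $\Delta_2$-/$\Lambda$-conditions enter. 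Three small points you should make explicit: (i) the same monotonicity facts are needed to check $L^\Phi(\rr d)+wL^\Phi(\rr d)\subset L^{p_0}(\rr d)+L^{p_1}(\rr d)$, so that $Tf$ is defined and each half of the splitting lies in the correct endpoint space; (ii) in the final absorption of the constant, convexity alone ($\Phi(t/C)\leqslant \Phi(t)/C$ for $C\geqslant 1$) suffices, so the extra appeal to $\Delta_2$ there is harmless but unnecessary; (iii) the $p_1=\infty$ modification you flag (choosing the splitting level so that $\|Th_s\|_{L^\infty}\leqslant s$, whence $\mu_{Th_s}(s)=0$) is needed, since the hypothesis allows $p_1=\infty$. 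Compared with the paper's choice to cite \cite{Liu}, your proof buys self-containedness and makes transparent where each hypothesis is used; the citation buys brevity and the greater generality of Liu--Wang's theorem, which also delivers the weak-to-weak conclusion in \eqref{Eq:InterpolCont} in one stroke.
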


\par

\begin{rem}
Let $\Phi$ and $T$ be the same as in
Proposition \ref{Prop:OrliczIntPol}.
Then the continuity of the mappings
in \eqref{Eq:InterpolCont} means
\begin{alignat*}{2}
\nm {Tf}{L^{\Phi}}
&\lesssim
\nm f{L^{\Phi}}, &
\quad f &\in L^{\Phi}(\rr d)
\intertext{and}
\nm {Tf}{w L^{\Phi}}
&\lesssim
\nm f{w L^{\Phi}}, &
\quad f &\in w L^{\Phi}(\rr d).
\end{alignat*}
\end{rem}

\par

% \begin{thm}\label{thm5.1}
% Let $(X, \mu)$ and $(Y, \nu)$ be measure spaces, $L_0(X), L_0(Y)$ the spaces of all measurable functions defined on $(X, \mu),(Y, \nu)$, respectively. Let $\Phi$ be a strict Young function; $T: L_0(X) \rightarrow$ $L_0(Y)$ be a quasi-linear operator. If $0<p_0<q_{\Phi} \leqslant p_{\Phi}<p_1 \leqslant \infty$ and $T:L^{p_0} \rightarrow wL^{p_0}$ continuously, $T:L^{p_1} \rightarrow wL^{p_1}$ continuously, then $T:wL^{\Phi} \rightarrow wL^{\Phi}$ continuously and $T:L^{\Phi} \rightarrow L^{\Phi}$ continuously.
% More precisely, if
% %
% \begin{align}
% \|T f\|_{w L^{p_0}} \leqslant A\|f\|_{L^{p_0}}, & \forall f \in L^{p_0}, \\
% \|T f\|_{w L^{p_1}} \leqslant B\|f\|_{L^{p_1}}, & \forall f \in L^{p_1},
% \end{align}
% %
% where $A, B$ both are constants independent of $f$, then there are constants $c_1, c_2$ which depend only on $A, B, L, \Phi$ and $p_0, p_1$ such that
% \begin{equation}
% \|T f\|_{w L^{\Phi}} \leqslant c_1\|f\|_{w L^{\Phi}}, \quad \forall f \in w L^{\Phi}
% \end{equation}
% and
% \begin{equation}
% \|T f\|_{L^{\Phi}} \leqslant c_2\|f\|_{L^{\Phi}}, \quad \forall f \in L^{\Phi} .
% \end{equation}
% \end{thm}

\par

A combination of Propositions
\ref{Prop:LpCont} and
\ref{Prop:OrliczIntPol}
gives the following result on continuity
properties for pseudo-differential operators
on $L^{\Phi}$-spaces.

\par

\begin{thm}\label{Thm:OrlContPseudo}
Let $\Phi$ be a strict Young function,
$A \in \GL (d,\mathbf R )$, and
$a\in S^0_{1,0}(\rr {2d})$. Then
$$
\op _A(a) : L^{\Phi}(\rr d) \to L^{\Phi}(\rr d)
\quad \text{and}\quad
\op _A(a) : wL^{\Phi}(\rr d) \to wL^{\Phi}(\rr d)
$$
are continuous.
\end{thm}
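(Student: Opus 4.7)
The plan is to derive this statement as a direct consequence of the linear $L^p$ continuity result Proposition \ref{Prop:LpCont} combined with the Orlicz interpolation theorem Proposition \ref{Prop:OrliczIntPol}. The strictness of $\Phi$ is precisely what bridges the gap between Lebesgue and Orlicz continuity.

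First I would observe that because $\Phi$ is a strict Young function, conditions (2) and (3) of Definition \ref{Def:StrictYoungFunc} hold, so by Proposition \ref{Prop:qPhiEquivCond} (and part (3) of Proposition \ref{Prop:YounFuncEquivCond}) we obtain
\[
1 < q_{\Phi} \leqslant p_{\Phi} < \infty .
\]
This allows us to pick real numbers $p_0,p_1$ with
\[
1 < p_0 < q_{\Phi} \leqslant p_{\Phi} < p_1 < \infty .
\]

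Next, since $a\in S^0_{1,0}(\rr{2d})$ and $A\in \GL(d,\mathbf R)$, Proposition \ref{Prop:LpCont} yields that $\op_A(a)$ extends to a linear and continuous operator on $L^{p_0}(\rr d)$ and on $L^{p_1}(\rr d)$. By Remark \ref{Rem:WeakLp}, the continuous embedding $L^p(\rr d)\hookrightarrow wL^p(\rr d)$ then provides the weak-type bounds
\[
\op_A(a) : L^{p_j}(\rr d)\to wL^{p_j}(\rr d),\qquad j=0,1,
\]
and consequently a well-defined linear continuous map from $L^{p_0}(\rr d)+L^{p_1}(\rr d)$ into $wL^{p_0}(\rr d)+wL^{p_1}(\rr d)$, which is the hypothesis \eqref{Eq:OrliczIntPol}.

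With the interpolation endpoints $p_0$, $p_1$ chosen to sandwich $q_\Phi$ and $p_\Phi$ as above, I would then apply Proposition \ref{Prop:OrliczIntPol} directly to $T=\op_A(a)$, which yields the continuity of $\op_A(a)$ on both $L^{\Phi}(\rr d)$ and $wL^{\Phi}(\rr d)$, as claimed. There is essentially no obstacle here beyond verifying the numerical bracket $p_0<q_\Phi\leqslant p_\Phi<p_1$ with $p_0>1$, which is exactly guaranteed by the strictness hypothesis on $\Phi$; the genuine work has been outsourced to the Calderón–Zygmund type result for $S^0_{1,0}$-symbols used in Proposition \ref{Prop:LpCont} and to the Liu–Wang interpolation theorem packaged in Proposition \ref{Prop:OrliczIntPol}.
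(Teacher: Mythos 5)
Your proof is correct and follows essentially the same route as the paper: strictness of $\Phi$ gives $1<q_\Phi\leqslant p_\Phi<\infty$, one sandwiches these exponents between $p_0,p_1\in(1,\infty)$, invokes Proposition \ref{Prop:LpCont} together with Remark \ref{Rem:WeakLp} for the weak-type endpoint bounds, and concludes with the interpolation result Proposition \ref{Prop:OrliczIntPol}. The only point the paper makes slightly more explicit is the (standard) fact that the two endpoint bounds yield a unique continuous extension to $L^{p_0}+L^{p_1}\to wL^{p_0}+wL^{p_1}$, citing \cite{BerLof}.
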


\par

\begin{proof}
By Propositions \ref{Prop:YounFuncEquivCond} and \ref{Prop:qPhiEquivCond} it follows
that $q_{\Phi}>1$ and $p_{\Phi}< \infty$.
Choose $p_0,p_1\in (1,\infty )$
such that $p_0 <q_{\Phi}$ and $p_1 >p_{\Phi}$.
In view of Remark
\ref{Rem:WeakLp} and Proposition \ref{Prop:LpCont},
% one has that for all
% $1<p<\infty$ there is a constant $C>0$ such that
%%
\begin{equation}
\|\mathrm{Op}(a) f\|_{wL^{p_j}}
\leqslant
\|\mathrm{Op}(a) f\|_{L^{p_j}}
\leqslant C\|f\|_{L^{p_j}},
\quad f \in L^{p_j}(\rr d),
\ j=0,1.
\end{equation}
%%
% which shows that
% $\mathrm{Op}(a): L^p \rightarrow wL^p$ continuously
% for all  $1<p<\infty$.
Then it follows that $\op _A(a)$ extends uniquely
to a continuous map from
$L^{p_0}(\rr d)+L^{p_1}(\rr d)$ to
$wL^{p_0}(\rr d)+wL^{p_1}(\rr d)$ (see e.{\,}g.
\cite{BerLof}).
Hence the conditions of Proposition
\ref{Prop:OrliczIntPol} are fulfilled and the
result follows.
\end{proof}

\par

By using Proposition
\ref{Prop:HormMult} instead of
Proposition \ref{Prop:LpCont} in the
previous proof
% of Theorem \ref{Thm:OrlContPseudo},
we obtain the following
extension of H{\"o}rmander's improvement
of Mihlin's Fourier multiplier theorem.
The details are left for the reader.

\par

\begin{thm}\label{Thm:OrlContHormMult}
Let $\Phi$ be a strict Young function
and
$a\in L^\infty (\rr d\setminus 0)$
be such that
\begin{equation}
\sup _{R>0}
\left (
R^{-d+2|\alpha |}\int _{A_R}
|\partial ^\alpha a(\xi )|^2\, d\xi
\right )
\end{equation}
is finite for every $\alpha \in \nn d$
with $|\alpha |\leqslant [\frac d2]+1$,
where $A_R$ is the annulus
$\sets {\xi \in \rr d}{R<|\xi |<2R}$.
Then $a(D)$ is continuous on
$L^\Phi (\rr d)$ and on $wL^\Phi (\rr d)$.
\end{thm}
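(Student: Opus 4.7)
The plan is to follow verbatim the template used for Theorem \ref{Thm:OrlContPseudo}, substituting Proposition \ref{Prop:HormMult} in place of Proposition \ref{Prop:LpCont}. The whole argument rests on being able to sandwich the two Orlicz Lebesgue exponents $q_\Phi$ and $p_\Phi$ strictly between two ordinary Lebesgue exponents $p_0<p_1$ in the Banach range $(1,\infty)$, and then to apply the interpolation result of Proposition \ref{Prop:OrliczIntPol}.

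First I would use the hypothesis that $\Phi$ is a strict Young function together with Proposition \ref{Prop:YounFuncEquivCond} to extract
$$
1<q_\Phi\leqslant p_\Phi<\infty .
$$
Consequently one can pick $p_0,p_1\in(1,\infty)$ with $p_0<q_\Phi$ and $p_1>p_\Phi$; this choice is needed precisely so that the interpolation hypothesis $p_0<q_\Phi\leqslant p_\Phi<p_1$ of Proposition \ref{Prop:OrliczIntPol} is met. Note that for this step we do not need to know anything about the operator yet; it only depends on $\Phi$.

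Next I would invoke Proposition \ref{Prop:HormMult} applied at both exponents $p_0$ and $p_1$ in $(1,\infty)$. The hypothesis on $a$ in Theorem \ref{Thm:OrlContHormMult} is identical to that of Proposition \ref{Prop:HormMult}, so the latter yields that $a(D)$ is continuous on $L^{p_0}(\rr d)$ and on $L^{p_1}(\rr d)$. By the continuous embedding $L^{p_j}(\rr d)\hookrightarrow wL^{p_j}(\rr d)$ recalled in Remark \ref{Rem:WeakLp}, this gives
$$
\nm{a(D)f}{wL^{p_j}}\leqslant \nm{a(D)f}{L^{p_j}}\lesssim \nm f{L^{p_j}},\qquad f\in L^{p_j}(\rr d),\ j=0,1.
$$
A standard sum-space argument (as in \cite{BerLof}, the same one invoked in the proof of Theorem \ref{Thm:OrlContPseudo}) then shows that $a(D)$ extends uniquely to a linear and continuous map from $L^{p_0}(\rr d)+L^{p_1}(\rr d)$ into $wL^{p_0}(\rr d)+wL^{p_1}(\rr d)$.

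Finally, all hypotheses of Proposition \ref{Prop:OrliczIntPol} are satisfied by $T=a(D)$ with the above pair $(p_0,p_1)$, so the conclusion gives both required continuity statements on $L^\Phi(\rr d)$ and on $wL^\Phi(\rr d)$. There is no substantive obstacle in this argument, the nontrivial work having already been done in Propositions \ref{Prop:HormMult}, \ref{Prop:YounFuncEquivCond} and \ref{Prop:OrliczIntPol}; the only thing to be a bit careful about is verifying that Proposition \ref{Prop:OrliczIntPol} is indeed applicable with $p_j$ chosen in the open range $(1,\infty)$ rather than being forced to take $p_0=1$, which is exactly what the strictness of the Young function (condition $q_\Phi>1$) guarantees.
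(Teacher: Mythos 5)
Your proposal is correct and coincides with the paper's intended argument: the paper proves Theorem \ref{Thm:OrlContHormMult} precisely by repeating the proof of Theorem \ref{Thm:OrlContPseudo} with Proposition \ref{Prop:HormMult} in place of Proposition \ref{Prop:LpCont}, leaving the details to the reader. Your write-up supplies exactly those details (choice of $p_0<q_\Phi\leqslant p_\Phi<p_1$ via Proposition \ref{Prop:YounFuncEquivCond}, the weak-type bounds from Remark \ref{Rem:WeakLp}, the sum-space extension, and Proposition \ref{Prop:OrliczIntPol}), so no further comment is needed.
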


\par

Finally, employing Theorem \ref{Thm:LpSGFIOcont},
we prove the following continuity result for Fourier integral operators
on $L^\Phi$-spaces. Here we let
%%
% /SC I propose a letter in \mathfrak for the loss parameter. 
% since the quantity that appears everywhere involves both
% p_\Phi, q_\Phi and d, I would indicate \Phi and d as parameters
% and use that one 
% I have defined a macro, so to change it uniformly in the text,
% if needed.
\begin{equation}\label{Eq:LossOrliczFIO}
\lossthreshold
\equiv
(d-1)
\max\left (
    \left|\frac{1}{p_\Phi}-\frac12\right | 
    ,
    \left|\frac{1}{q_\Phi}-\frac12\right | 
    \right ) .
\end{equation}

\par

\begin{thm}\label{Thm:OrlContSGFIOs}
    Let $\Phi$ be a strict Young function,
    $\lossthreshold$ be as in \eqref{Eq:LossOrliczFIO}
    and $m,\mu  \in \mathbf R$ be such that
    %
    %/SC
    \begin{equation}\label{eq:orderOrlcont}
    m  \leqslant 
    -\lossthreshold
    \quad \text{and}\quad
    \mu  \leqslant 
    -\lossthreshold ,
    \end{equation}
    with strict inequalities when
    $q_\Phi <p_\Phi$.
Suppose that $\varphi \in \phf$ and $a\in S^{m,\mu}(\rr {2d})$
is such that $|\xi|\geqslant \varepsilon$, for some $\varepsilon >0$, 
on the support of $a$. Then
$\Op_{\varphi}(a)$ from  $\mascS (\rr d)$
to $\mascS '(\rr d)$ extends uniquely
to a continuous operator from $L^\Phi (\rr d)$
to itself, and from $wL^\Phi (\rr d)$
to itself.
\end{thm}

% \begin{thm}\label{Thm:OrlContSGFIOs}
%     Let $\Phi$ be a strict Young function, $\varphi\in\phf$ a phase function, 
%     $a\in S^{m,\mu}(\rr {2d})$ an amplitude function such that 
%     %
%     \begin{equation}\label{eq:orderOrlcont}
%     m<\mathfrak{T}_{d,\Phi}
%         \textrm{ and }
%     \mu<\mathfrak{T}_{d,\Phi},
%     \end{equation}
%     %
%     where
%     %
%     $$
%     \mathfrak{T}_{d,\Phi}=-(d-1)\max\left (
%     \left|\frac{1}{p_\Phi}-\frac12\right| 
%     ,
%     \left|\frac{1}{q_\Phi}-\frac12\right| 
%     \right ).
%     $$
%     %
%     Moreover, assume that $|\xi|\ge\varepsilon$ on the support of $a$, for some $\varepsilon>0$.
%     %
%     Then,
%     %
%     $$
%     \Op_\varphi(a)\colon L^\Phi(\rr d)\to L^\Phi(\rr d)
%     \quad \text{and}\quad
%     \Op_\varphi(a) : wL^{\Phi}(\rr d) \to wL^{\Phi}(\rr d)
%     $$
%     %
%     are continuous.
% \end{thm}

\par

\begin{rem} 
    In contrast to 
    condition \eqref{eq:sharp-thresholds} in Theorem \ref{Thm:LpSGFIOcont},
    % regarding the
    % $L^p$-boundedness of the Fourier 
    % integral operators in 
    % \eqref{eq:fio2},
    strict inequality is required in \eqref{eq:orderOrlcont} when  $q_\Phi <p_\Phi$. 
    The sharpness of the latter
    condition will be
    investigated elsewhere.
\end{rem}

\par

\begin{proof}
First suppose that $q_\Phi =p_\Phi =p$. Then $1<p<\infty$,
$L^\Phi = L^p$, $wL^\Phi =wL^p$ and the result follows
from Remark \ref{Rem:EquivPhi}, Theorem \ref{Thm:LpSGFIOcont}
and Proposition \ref{Prop:OrliczIntPol}.

\par

Next suppose $q_\Phi <p_\Phi$. As above, by Proposition
\ref{Prop:YounFuncEquivCond} it follows
that $q_{\Phi}>1$ and $p_{\Phi}< \infty$.
Choose $p_0,p_1\in (1,\infty )$
such that $p_0 <q_{\Phi}$, $p_1 >p_{\Phi}$
and
$$
     m,\mu  < -(d-1) \left|\frac{1}{p_j}-
     \frac12\right|
    % \textrm{ and }
    % \mu _0< -(d-1) \left|\frac{1}{p_j}-
    % \frac12\right|
    , \qquad j=0,1.
$$
In view of Remark
\ref{Rem:WeakLp} and Theorem \ref{Thm:LpSGFIOcont},
\begin{equation}
\|\mathrm{Op}_\varphi(a) f\|_{wL^{p_j}}
\leqslant
\|\mathrm{Op}_\varphi(a) f\|_{L^{p_j}}
\leqslant C\|f\|_{L^{p_j}},
\quad f \in L^{p_j}(\rr d),
\ j=0,1.
\end{equation}
By Proposition \ref{Prop:OrliczIntPol}, the claim follows, 
arguing as in the final step of the proof of Theorem \ref{Thm:OrlContPseudo}.
\end{proof}

\par

%% SC 20/10/23 %%%%%%%
\subsection{Sobolev-Kato-Orlicz spaces, adapted global
wave-front sets, and applications to PDEs}
The next definition
is a natural one, in view of the
results in the previous subsection.

\par

\begin{defn}\label{Def:SobOrlSpcs}
Let $s,\sigma\in\mathbf R$, $\Phi$ be a Young function
and $\vartheta_{s,\sigma}(x,\xi)=
\eabs x^s\eabs \xi ^\sigma$. Then the
\emph{Sobolev-Kato-Orlicz} space $\Hphi(\rr d)$ is given by
\[
\Hphi (\rr d) \equiv \sets {f\in\mascS ^\prime(\rr d)}
{\Op(\vartheta_{s,\sigma})f\in L^\Phi(\rr d)},
\]
with topology induced by the norm
$$
\nm f{H_{s,\sigma}^\Phi}=\|\Op(\vartheta_{s,\sigma})f\|_{L^\Phi}.
$$ 
\end{defn}

\par

By straight-forward computations it follows that
$\Hphi(\rr d)$ is a Banach space. Obviously,
$H^\Phi_{0,0}(\rr d)=L^\Phi(\rr d)$.

\par

By arguing as in the established SG
calculus, the following result is
an immediate consequence of Theorems 
\ref{Thm:OrlContPseudo} and 
\ref{Thm:OrlContSGFIOs} (see, e.g., 
\cite{CJT13b,CJT16}).

\par

\begin{thm}\label{Thm:SobOrlSpcsCont}
	Let $\Phi$ be a strict Young function, %/SC
	$\lossthreshold$ be as in \eqref{Eq:LossOrliczFIO}, 
        $l,\lambda, m, \mu$, $s, \sigma \in \mathbf R$
        be such that
    \begin{equation}\label{eq:orderOrlcont2}
    m  \leqslant 
    l-\lossthreshold
    \quad \text{and}\quad
    \mu  \leqslant 
    \lambda -\lossthreshold ,
    \end{equation}
    with strict inequalities when
    $q_\Phi <p_\Phi$.
    Suppose that $A\in \GL (d,\mathbf R)$, $\varphi \in \phf$
        and $a\in S^{m,\mu}(\rr {2d})$.
	    Then the following is true:
	    \begin{enumerate}
		\item the map $\Op _A(a)$ on
  $\mascS '(\rr d)$ restricts to a
  linear and continuous map from
  $H^\Phi _{s+m,\sigma+\mu}(\rr d)$ to $\Hphi(\rr d)$;

\vrum
  
		\item if, additionally, $|\xi|\geqslant\varepsilon$, for some
		$\varepsilon>0$, on the support of $a$, 
  then, $\Op_{\varphi}(a)$ from
  $\mascS (\rr d)$ to $\mascS '(\rr d)$
  extends uniquely  to a continuous operator
from $H^\Phi_{s+l,\sigma+\lambda}(\rr d)$ to $\Hphi(\rr d)$.
\end{enumerate}
\end{thm}

\par

\begin{proof}
    We prove only (2). The assertion (1) follows
    by similar arguments and is left for the reader. %/SC
    Let $f\in H^\Phi_{s+l,\sigma+\lambda}$ and
    $u=\Op(\vartheta_{s+l,\sigma+\lambda})f$.
    Then, $u\in L^\Phi$, $\|f\|_{H^\Phi_{s+l,\sigma+\lambda}}=\|u\|_{L^\Phi}$, and
    \begin{align*}
        \nm {\Op_\varphi(a)f}{\Hphi}
        &=
        \nm {\Op(\vartheta_{s,\sigma})\Op_\varphi(a)
        (\Op(\vartheta_{s+l,\sigma+\lambda}))^{-1}u}{L^\Phi}
        \\[1ex]
        &\leqslant
        \nm {\Op(\vartheta_{s,\sigma})\Op_\varphi(a)
        (\Op(\vartheta_{s+l,\sigma+\lambda}))^{-1}}
        {\mathcal{L}(L^\Phi)}\cdot
        \|u\|_{L^\Phi}
        \\[1ex]
        &=
        \| \Op_\varphi(b)+\mathcal{K}\|_{\mathcal{L}(L^\Phi)}
        \cdot 
        \|f\|_{H^\Phi_{s+l,\sigma+\lambda}},
    \end{align*}
    where $\mathcal{K}$ is an operator with kernel in $\mascS (\rr {2d})$.
    In particular, $\maclK$ is continuous from $\mascS '(\rr d)$ to
    $\mascS (\rr d)$. Hence \eqref{Eq:OrlLpInclusions} gives
    $\|\mathcal{K}\|_{\mathcal{L}(L^\Phi)}<\infty$.
    By the properties of the $SG$ calculus, we have 
    $b\in S^{m-l,\mu-\lambda}$. Since $b$ is the asymptotic
    sum of an expansion involving $a$, $\vartheta_{s,\sigma}$, 
    $\vartheta_{s+l,\sigma+\lambda}$, their 
    derivatives, and suitable compositions with $\varphi^\prime_x$ and 
    $\varphi^\prime_\xi$ (see \cite{CJT16} and the references therein),
    on its support it holds $|\xi|\geqslant \varepsilon^\prime >0$,
    and the hypotheses
    of Theorem \ref{Thm:OrlContSGFIOs} are satisfied.
    We conclude that it also holds
    $\| \Op_\varphi(b)\|_{\mathcal{L}(L^\Phi)}<\infty$, and the claim follows.
\end{proof}

\begin{rem}\label{Rem:SobOrlWFS}
	We observe that $\Hphi(\rr d)$
 in Theorem \ref{Thm:SobOrlSpcsCont}
 is $SG$-admissible and that the pair
 $(H^\Phi_{s+m,\sigma+\mu}(\rr d), \Hphi(\rr d))$ is $SG$-ordered with respect to
 $(m,\mu )$ (see \cite{CJT13a,CJT13b,CJT16}
 for details and terminology).
	It is then possible to apply the theory of (global) wave-front sets developed in \cite{CJT13b} and \cite{CJT16}, choosing the
	Sobolev-Kato-Orlicz spaces from Definition \ref{Def:SobOrlSpcs} as reference spaces.
    In particular, it holds %/SC
    \[
        f\in \Hphi(\rr d) \iff
        \WFHphi(f) \equiv \WF_{\Hphi(\rr d)} (f) =\emptyset.
    \]
\end{rem}

\par

We conclude the paper with two results about propagation of singularities with respect to the $\Hphi$ spaces. These follow, as explained in Remark \ref{Rem:SobOrlWFS},
by Theorem \ref{Thm:SobOrlSpcsCont} and the theory developed in \cite{CJT13a,CJT13b,CJT16}. 
%by the theory developed in the mentioned papers and Theorem \ref{Thm:SobOrlSpcsCont}. 
%
%
%
%
The details are left for the reader.

\par

The first result concerns mapping properties of wave-front sets of pseudo-differential
operators with symbols in SG classes.

\par

\begin{thm}\label{Thm:SobOrlEllReg}
	Let $a\in S^{m,\mu}(\rr{2d})$, $f\in\Hphi(\rr d)$, $u\in\mascS^\prime(\rr d)$, $m,s,\mu ,\sigma\in\rr{}$,
	and consider the equation $\Op(a)u=f$. Then,
\begin{equation}\label{Eq:WFEmbeddings}
\WFHphi
(f)\subseteq\WF^\Phi_{s+m,\sigma+\mu}
(u)\subseteq
\WFHphi (f)\cup\Char(a).
\end{equation}
%%
 %
	% If $a$ is elliptic, we actually have $\WF_{H^\Phi_{s+m,\sigma+\mu}}(u)=\WF_{\Hphi}(f)$.
	%
\end{thm}

\par

In addition, if $a$ in Theorem
\ref{Thm:SobOrlEllReg} is elliptic, then
$\Char(a)=\emptyset$ and
\eqref{Eq:WFEmbeddings} gives
$$
\WF^\Phi_{s+m,\sigma+\mu}(u)
=
\WFHphi(f).
$$

\par

The next two results concerns properties of solutions to hyperbolic
Cauchy problems.

\par

\begin{prop}\label{Prop:SolCP}
	Let $h\in S^{1,1}_\mathrm{cl}(\rr{2d})$ be such that $h=h_p+h_0$,
	$h_p\in S^{1,1}_\mathrm{cl}(\rr{2d})$ real-valued,
	$h_0\in S^{0,0}_\mathrm{cl}(\rr{2d})$, 
	$u_0\in H^\Phi_{s+\lossthreshold,\sigma+\lossthreshold}(\rr d)$,
	with $s,\sigma\in \mathbf R$
    and $\lossthreshold$ as in \eqref{Eq:LossOrliczFIO}. 
    Then, the solution of
	\begin{equation}\label{Eq:CP}
		\begin{cases}
			D_t u(t)=\Op(h)u(t), \; t\in[-T,T], T>0,\\
			\phantom{D_t}u(0)=u_0,
		\end{cases}
	\end{equation}
is given by
$$
u(t)=\Op_{\varphi(t)}(a(t))u_0 \, \operatorname{Mod}\,
( C^\infty([-T^\prime,T^\prime];\mascS(\rr d)) ),
\quad t\in[-T^\prime,T^\prime],
$$
for suitable families of regular phase functions $\varphi(t)$ and order 
    $(0,0)$ symbols $a(t)$. 
    \end{prop}

\par

Under the same hypotheses of Proposition \ref{Prop:SolCP}, neglecting the low frequencies by means of
a suitable cut-off function
\begin{equation}\label{Eq:SuitCuttoff}
\chi (\xi )
=
\begin{cases}
0, & |\xi | \geqslant R,
\\[1ex]
1, & |\xi | \leqslant r,
\end{cases}
\end{equation}
for the solution $u$ of the
Cauchy problem \eqref{Eq:CP}, Theorem \ref{Thm:SobOrlSpcsCont} implies that
$(1-\chi(D))u\in C^\infty ([-T^\prime,T^\prime];\Hphi(\rr d))$. Moreover, we have the following.

    \begin{thm}\label{Thm:SobOrlHypWF}
    Let $u$ be the solution in Proposition \ref{Prop:SolCP} and $\chi\in C_0^\infty(\rr d)$
%    be such that $\supp \chi \subseteq B_R(0)$ for suitable $R>0$ and $\chi =1$ in
%    $B_r(0)$ for some $r<R$. Then
%    \\[1ex]
    satisfy \eqref{Eq:SuitCuttoff} for some suitable $R>0$ and $r\in(0,R)$. Then
	\[
		\WFHphi((1-\chi(D))u(t)) = 
        \Psi(t)(\WF^\Phi_{s+\lossthreshold,\sigma+\lossthreshold}(u_0)),\;  
        t\in[-T^\prime,T^\prime],
	\]
	where $\Psi(t)$ is a smooth family of transformations, generated by the smooth family of phase functions $\varphi(t)$. 
\end{thm}

\par

\begin{rem}
	Local counterparts of Theorem \ref{Thm:SobOrlEllReg}, Proposition \ref{Prop:SolCP}, and Theorem \ref{Thm:SobOrlHypWF} also
	hold true when $x$ belongs to a bounded open subset of $\rr d$.
\end{rem}

\par

%%%%%%%%%%%%%%%%

%\appendix
%
%\par
%
%%%%%%%%%%%%%%%%%%%%%%
%\section{Supplemental computations}\label{App:Appendix}
%%%%%%%%%%%%%%%%%%%%%%
%
%\par
%
%\par


\begin{thebibliography}{99}

\bibitem{ApKaZa}
J. Appell, A. Kalitvin, P. Zabreiko,
\emph{Partial integral operators in Orlicz spaces
with mixed norm,
{\rm {in: Colloquium Mathematicum},
\textbf{78}, 1998,
pp. 293--306.}}

\bibitem{BerLof} {J. Bergh, J. L{\"o}fstr{\"o}m},
\emph{Interpolation spaces, An introduction},
Springer-Verlag, {Berlin Heidelberg NewYork}, 1976.

\bibitem{BlaUst} O. Blasco, R {\"U}ster,
\emph{Transference and restriction of 
Fourier multipliers on Orlicz spaces},
Math. Nachr. (published online 2023).

%% SC 20/10/23 %%%%%%
\bibitem{CJT13a} S. Coriasco, K. Johansson, J. Toft,
\emph{Local wave-front sets of Banach and Fr\`echet types, and pseudo-differential operators},
{Monatsh. Math.} \textbf{169}, 3-4 (2013), 285--316.

\bibitem{CJT13b} S. Coriasco, K. Johansson, J. Toft,
\emph{Global wave-front sets of Banach, Fr\`echet and modulation space types, and pseudo-differential operators},
{J. Differential Equations} \textbf{254}, 8 (2013), 3228--3258.

\bibitem{CJT16} S. Coriasco, K. Johansson, J. Toft,
\emph{Global wave-front properties for Fourier integral operators and hyperbolic problems},
{J. Fourier Anal. Appl.} \textbf{22}, 2 (2016), 285--333.
%%%%%%%%%%%%%%

\bibitem{CR2} S. Coriasco, M. Ruzhansky,
\emph{Global $L^p$-continuity of Fourier
integral operators}, {Trans. Amer. Math. Soc.}
\textbf{366}, 5 (2014), 2575--2596.

\bibitem{CPV} D. Costarelli,  M. Piconi,  G. Vinti,
\emph{On the convergence properties of sampling Durrmeyer-type operators in Orlicz spaces}, {Math. Nachr.}
\textbf{296}, 2 (2023), 588--609.

\bibitem{DSFS} D. Dos Santos Ferreira, W. Staubach, 
\emph{Global and local regularity of Fourier integral operators 
on weighted and unweighted spaces}, {Mem. Amer. Math. Soc.} 
\textbf{229} (2014), no. 1074.

\bibitem{HBQ} LX. Han, YM. Bai, F. Qi,
\emph{Approximation by multivariate Baskakov-Durrmeyer operators in Orlicz spaces},
{J. Inequal. Appl.} \textbf{2023}, 118 (2023).

\bibitem{HaH} P. Harjulehto, P. H{\"a}st{\"o},
\emph{Orlicz spaces and generalized Orlicz spaces},
Springer, (2019).

\bibitem{Ho0}
L. H{\"o}rmander,
\emph{Estimates for translation invariant operators
in $L^p$ spaces}, Acta Math. \textbf{104} (1960), 93--140. 

\bibitem{Ho1} L. H{\"o}rmander, \emph{The analysis of linear
partial differential operators}, vol {I--III},
Springer-Verlag, Berlin Heidelberg NewYork 
Tokyo, 1983, 1985.

\bibitem{Ka14}
A. Yu. Karlovich,
\emph{Boundedness of Pseudodifferential Operators on Banach Function Spaces},
Operator Theory: Advances and Applications, \textbf{242} (2014), 185--195.

\bibitem{KaMaPe}
A. Kaminska, L. Maligranda,
L. E. Persson
\emph{Type, cotype and convexity
properties of Orlicz spaces},
Coll. d. Dep. de Anal. Mat. Curso,
\textbf{42}, Univ. Madrid, Madrid,
1996, 1997.

\bibitem{KovSkr}
V. Kova\v{c}, K. A. \v{S}kreb,
\emph{Bilinear embeddings in Orlicz spaces for 
divergence-form operators with complex 
coefficients},
J. Func. Anal \textbf{284} (2023),
109884.


\bibitem{Liu} PeiDe Liu, MaoFa Wang,
\emph{Weak Orlicz spaces: some basic properties
and their applications to harmonic analysis},
Science China Mathematics,
\textbf{56}, Springer, 2013, 789--802.

\bibitem{MajLab1}
W. A. Majewski, L. E. Labuschagne,
\emph{On applications of Orlicz spaces to statistical physics},
Ann. Henri Poincar{\'e} \textbf{15} (2014), 1197--1221.

\bibitem{MajLab2}
W. A. Majewski, L. E. Labuschagne,
\emph{On entropy for general quantum systems},
Adv. Theor. Math. Phys. \textbf{24} (2020), 491--526.

\bibitem{Mal1}
L. Maligranda,
\emph{Indices and interpolation}, 
Dissertationes Math. (Rozprawy Mat.)
\textbf{234} (1985), pp. 1--54.

\bibitem{Mal2}
L. Maligranda,
\emph{Orlicz Spaces and Interpolation},
Campinas 1989.

\bibitem{Mal3}
L. Maligranda,
\emph{Some remarks on Orlicz's 
interpolation theorem},
Studia Math. \textbf{95} (1989),
43--58.

\bibitem{Mic}
S. G. Michlin,
\emph{Fourier integrals and multiple singular integrals}
(Russian),
Vestnik Leningrad. Univ. Ser. Mat. Meh. Astronom. \textbf{12}
(1957), 143--155.

\bibitem{Mil} M. Milman,
\emph{A note on $L(p,\, q)$ spaces and Orlicz spaces
with mixed norms},
Proc. Ame. Math. Soc. \textbf{83} (1981), 743--746.

\bibitem{Orl} W. Orlicz,
\emph{{\"U}ber eine gewisse Klasse von R{\"a}umen vom Typus B}
(German), Bull. Int. Acad. Polon. Sci. A 1932, 207--220 (1932).

\bibitem{OsaOzt}
A. Osan{\c{c}}liol, S. {\"O}ztop,
\emph{Weighted Orlicz algebras on locally
compact groups},
J. Aust. Math. Soc. \textbf{99} (2015),
399--414.

\bibitem {Rao}
 M. M. Rao, Z. D. Ren, \emph{Theory of Orlicz Spaces},
 Marcel Dekker, New York, 1991.

\bibitem{SchF} C. Schnackers, H. F{\"u}hr, \emph{Orlicz modulation Spaces},
Proceedings of the 10th International Conference on Sampling Theory and Applications.

\bibitem{SSS91} A. Seeger, C.D. Sogge, E.M. Stein, 
     \emph{Regularity
       properties of Fourier integral operators}, Ann. of Math.
       \textbf{134} (1991), 231--251.

\bibitem{Sh}
M. Shubin, \emph{Pseudodifferential operators and the spectral
theory}, Springer Series in Soviet Mathematics,  Springer Verlag,
Berlin 1987.

\bibitem{Toft15} J. Toft,
\emph{Gabor analysis for a broad class of quasi-Banach modulation
spaces, {\rm {in: S. Pilipovi{\'c}, J. Toft (eds)}},
Pseudo-differential operators, generalized functions},
Operator Theory: Advances and Applications \textbf{245},
Birkh{\"a}user, 2015, 249--278.

\bibitem{TofUst} J. Toft, R. {\"U}ster,
\emph{Pseudo-differential operators on Orlicz
modulation spaces},
J. Pseudo-Differ. Oper. Appl.
\textbf{14} (2023), Paper no. 6.

\bibitem{ToUsNaOz}  J. Toft, R. {\"U}ster,
E. Nabizadeh, S. {\"O}ztop,
\emph{Continuity and Bargmann mapping
properties of quasi-Banach Orlicz modulation spaces},
Forum. Math. \textbf{34} (2022), 1205--1232.

\bibitem{Tr} G. Tranquilli,
\emph{Global normal forms and global properties in
function spaces for second order Shubin type 
operators},
PhD Thesis, 2013.

\bibitem{Wong} M. W. Wong,
\emph{An introduction to pseudo-differential 
operators}, 
Ser. Anal. Appl. Comput. \textbf{6},
World Scientific, Hackensack, 2014.
\end{thebibliography}
\end{document}